\def\ps@pprintTitle{%
 \let\@oddhead\@empty
 \let\@evenhead\@empty
 \def\@oddfoot{}%
 \let\@evenfoot\@oddfoot}
\begin{document}

\begin{frontmatter}

\title{Fast Tensor Needlet Transforms for Tangent Vector Fields\\ on the Sphere\tnoteref{tdate}}
\tnotetext[tdate]{\today}
\author[addrltu]{Ming Li}
\ead{ming.li@latrobe.edu.au}

\author[addrltu]{Philip Broadbridge}
\ead{P.Broadbridge@latrobe.edu.au}

\author[addrltu]{Andriy Olenko}
\ead{a.olenko@latrobe.edu.au}

\author[addrltu,addrunsw]{Yu Guang Wang}
\ead{yuguang.wang@unsw.edu.au}
%
%
\address[addrltu]{Department of Mathematics and Statistics,
La Trobe University, Melbourne, VIC, 3086, Australia}
\address[addrunsw]{School of Mathematics and Statistics,
The University of New South Wales, Sydney, NSW, 2052, Australia}

\begin{abstract}
This paper constructs a semi-discrete tight frame of tensor needlets associated with a quadrature rule for tangent vector fields on the unit sphere $\sph{2}$ of $\Rd[3]$ --- tensor needlets. The proposed tight tensor needlets provide a multiscale representation of any square integrable tangent vector field on $\sph{2}$, which leads to a multiresolution analysis (MRA) for the field. From the MRA, we develop fast algorithms for tensor needlet transforms, including the decomposition and reconstruction of the needlet coefficients between levels, via a set of filter banks and scalar FFTs. The fast tensor needlet transforms have near linear computational cost proportional to $N\log \sqrt{N}$ for $N$ evaluation points or coefficients. Numerical examples for the simulated and real data demonstrate the efficiency of the proposed algorithm.
\end{abstract}

\begin{keyword}
Tangent vector field \sep tensor needlets \sep tight frames \sep spherical harmonics \sep FFTs
\end{keyword}

\end{frontmatter}

\section{Introduction}
Numerous processes studied in geosciences, planetary science and cosmology exhibit relatively small changes in their vertical (radial) direction compared to surface (tangential) directions. In many applications the vertical  component can be neglected and only tangential components are of main interest. \emph{Tangent (vector) fields} on the sphere appear in many real-world applications, such as geophysics, astrophysics, and environmental sciences, in which wind and oceanic currents, gravity, electric and magnetic fields are some of the most widely studied examples \cite{FrSc2009,GaLeSl2011, Giraldo2004, Holton1973,Watterson2001}. Tangent fields also provide an important tool for modelling evolutions of systems described by partial differential equation on the sphere, see e.g. \cite{Anh2018, GaLeSl2011}. In this paper, we develop a localized tight frame for tangent fields on $\sph{2}\subset\Rd[3]$ --- \emph{tensor needlets}. The tensor needlet system provides the multiscale decomposition of any square integrable spherical tangent field. From this, we develop an efficient computational strategy for the multiresolution analysis by tensor needlets, which we call fast tensor needlet transforms or FaTeNT. We then apply FaTeNT to model the global wind field. 

 It is known that each tangent field can be decomposed into two components: divergent-free and curl-free, and each part may provide some important physical insight in specific changes of  tangent fields. Compared with the available theory and learning tools for scalar fields on the sphere and other manifolds, modelling tangent fields has been less studied, which was the main motivation of this research.

Several techniques have been developed for tangent field approximation on the sphere. In the early 1980's, Wahba \cite{Wahba1982} used vector spherical harmonics (VSH) to construct the approximation of the tangent field. Freeden and Gervens \cite{Freeden1993} considered a similar approximation technique using vector spherical splines. They introduced positive definite kernels for fitting and decomposing a field using spherical basis functions (SBFs) \cite{Fengler2005,FrSc2009}.
The authors of \cite{Krishnamurti2018,Lynch1988} investigated the approximation of the stream function (divergence-free part) and velocity potential (curl-free part) of the field, which however relies on the computation of the divergence and vorticity of the field. In \cite{FuWr2009}, Fuselier and Wright studied the spherical basis function interpolation and approximation for  tangent fields on the sphere, providing both theoretical and simulation verifications on the stability and error estimations. However, a common shortcoming  for the interpolant-based approximation is that solving a linear system (to find the optimal coefficient vectors) can be computationally expensive when the set of scattered data points becomes large.

The multiresolution analysis is well developed to deal with sparse high-dimensional data in the Euclidean space $\R^{d}.$ It allows fast algorithm implementations for building the approximators.
Multiscale representation systems in $\R^{d}$ including wavelets, framelets, curvelets, shearlets, etc., have been well-developed and widely used, see e.g. \cite{BoKuZh2015,CaDeDoYi2006,Chui1992,CoDaVi1993,Daubechies1992,Mallat2009, Meyer1990}. The core of the classical framelet (and wavelet) construction relies on the extension principles such as unitary extension principle (UEP) \cite{RoSh1997}, oblique extension principle (OEP) and mixed extension principle (MEP) \cite{DaHaRoSh2003}. The extension principles associate framelet systems with filter banks, which enables fast algorithmic realizations for the framelet transforms and applications, see e.g. \cite{DaHaRoSh2003,HaZhZh2016,Mallat2009}. The fast algorithms include the filter bank decomposition and reconstruction of a representation system which uses the convolution and the fast Fourier transforms (FFTs) and achieve computational complexity in proportion to the size of the input data (up to a log factor). However, multiscale representation systems on the sphere and their corresponding fast algorithms are less studied. Wang and Zhuang \cite{WaZh2018} constructed a tight framelet system on a mainfold by means of orthogonal polynomials, localized kernels and affine systems. It was used to develop fast algorithmic realizations for the multi-level framelet filter bank transforms using discrete FFTs on  compact Riemannian manifolds. Similar approaches can be seen in Fischer, Mhaskar and Prestin in \cite{FiPr1997, MhPr2004}. Coifman, Maggioni, Mhaskar and Dong \cite{CoMa2006,Dong2017,MaMh2008,Mhaskar2010} considered more general cases, for which diffusion wavelets, diffusion polynomial frames and wavelet tight frames on manifolds and graphs were constructed. These frameworks have been well developed for scalar fields, but the case of vector fields on the sphere is little studied.

In this paper, we study multiresolution analysis for tangent fields on the sphere. Using the framework in \cite{WaZh2018} and fine properties of vector spherical harmonics, we first construct the tight frame of continuous tensor needlets and then semi-discrete ones. The approach is based on discretization of the continuous systems using a sequence of polynomial-exact quadrature rules on the sphere. We give the theoretical results on the equivalence conditions for a sequence of (continuous and semi-discrete) tensor needlets to be a sequence of tight frames on $\vLp{2}{2}$. These results offer tools to construct tensor needlet transforms and filter banks for a tight tensor needlet system. Then, we detail the multi-level tensor needlet transforms that include the decomposition and reconstruction. Namely, we decompose tensor needlet coefficients into a coarse scale approximation coefficient sequence (low-pass) together with multiple coarse scale detail coefficient sequences (high-pass) and reconstruct from the coarse scale approximation and details to fine scales. In particular, downsampling and upsampling operations are used in the decomposition and reconstruction, while (discrete) convolutions with filters in the filter bank are employed in both. Using the recently developed FFTs for vector spherical harmonics (FaVeST) in \cite{FaVest2019}, we can speed up the decomposition and reconstruction by implementing discrete Fourier transforms in the convolutional operations. It leads to fast tensor needlet transform (FaTeNT) algorithms. Numerical simulations, including simulated tangent fields with characteristics similar to those of atmospheric wind fields and real-world case study of climatological wind field, demonstrate the efficiency of our proposed algorithms.

The main contributions of the paper are:
\begin{itemize}
	\item The construction of tight continuous and semi-discrete tensor needlet systems with rigorous theoretical analysis of their tightness.
	\item The development of fast tensor needlet transforms with nearly linear computational complexity and low redundancy rate based on the recently proposed fast vector spherical harmonic transforms (FaVeST).
	\item  An extension of results in \cite{WaZh2018} to the case of tangent fields on the sphere.
	\item Detailed numerical studies that demonstrate that the developed fast algorithms work favourably on modelling tangent fields on the sphere, and also indicate a good potential on vectorial approximation via localized tight frames of tensor needlets.
\end{itemize}

The paper is organized as follows. In Section~\ref{sec2} we introduce the necessary background
and notation. The construction of tight frames of tensor needlets (for both continuous and discrete case) are given in detail
in Section~\ref{sec3}. Section~\ref{sec4} details the algorithmic implementations of fast tensor needlet transforms. In Section~\ref{sec5}, we show numerical studies to validate the proposed fast tensor needlet transforms.

\section{Preliminaries}\label{sec2}
Let $\C^{3}$ be the $3$-dimensional complex coordinate space.
In the following, the elements of $\C^{3}$ are column vectors. For $\PT{x}\in\C^{3}$, let the row vector $\PT{x}^{T}$ be the transpose of $\PT{x}$ and $\conj{\PT{x}}$ is the complex conjugate to $\PT{x}$. The inner product of two vectors $\PT{x}$ and $\PT{y}$ in $\C^{3}$ is
$\PT{x}\cdot\PT{y}=\sum_{i=1}^{3}x_i\conj{y_i}$, where $x_i,y_i$ are components of $\PT{x}$ and $\PT{y}$. The (Euclidean) $\ell_{2}$ norm of $\PT{x}$ is $|\PT{x}|=\sqrt{\PT{x}\cdot\PT{x}}$.
The tensor product for two vectors $\PT{x}$ and $\PT{y}$ in $\C^{3}$ is a matrix $\PT{x}\otimes\PT{y}$ such that
\begin{equation*}
	(\PT{x}\otimes\PT{y})_{i,j} = x_{i}\conj{y_{j}}.
\end{equation*}
	$\PT{x}\otimes\PT{y} = \PT{x}\conj{\PT{y}}^{T}$
is the matrix product of $\PT{x}$ with the transpose of $\PT{y}$. 

\subsection{Function spaces and vector spherical harmonics}
Let $\sph{2}$ be the unit sphere in $\Rd[3]$.
A $\C^{3}$-valued function (defined on $\sph{2}$) is called a \emph{vector field} on $\sph{2}$. For a vector field $\PT{f}$ on $\sph{2}$, the \emph{normal vector field} and \emph{tangent vector field} for $\PT{f}$ are
\begin{equation*}
	\PT{f}_{\rm nor}(\PT{x}):= (\PT{f}\cdot\PT{x})\PT{x},\quad
	\PT{f}_{\rm tan}(\PT{x}):= \PT{f} - \PT{f}_{\rm nor},
\end{equation*}
where $\PT{x}\in \sph{2}$.
A vector field $\PT{f}$ is called \emph{tangent (vector) field} if
$\PT{f}=\PT{f}_{\rm tan}$, i.e. $\PT{f}_{\rm nor}(\PT{x})\equiv 0$ on $\sph{2}$. In this paper, we study the tangent field.
Let $\vLp{2}{2}$ be the $L_2$ space of tangent fields on $\sph{2}$ with the inner product
\begin{equation*}
	\InnerL{\PT{f},\PT{g}} := \InnerL[\vLp{2}{2}]{\PT{f},\PT{g}} := \int_{\sph{2}} \PT{f}(\PT{x})\cdot\PT{g}(\PT{x})\IntDiff{x},\quad \PT{f},\PT{g}\in\vLp{2}{2}
\end{equation*}
and the induced finite $L_2$ norm $\normb{\PT{f}}{\vLp{2}{2}}:=\sqrt{\InnerL{\PT{f},\PT{f}}}$. For a tangent field $\PT{f}=(f_1,f_2,f_3)^{T}\in \vLp{2}{2}$, the integral $\int_{\sph{2}}\PT{f}(\PT{x})\IntDiff{x}$ of $\PT{f}$ on $\sph{2}$ denotes the vector of componentwise integrals $(\int_{\sph{2}}f_1(\PT{x})\IntDiff{x},\int_{\sph{2}}f_2(\PT{x})\IntDiff{x},\int_{\sph{2}}f_3(\PT{x})\IntDiff{x})^{T}$.
A $\C^{3\times 3}$-valued \emph{tensor field} $\mathbf{u}$ on $\sph{2}$ is a $\C^{3\times 3}$-valued function on $\sph{2}$. Suppose that each column of $\C^{3\times 3}$-valued tensor field $\PT{u}$ is a tangent field in $\vLp{2}{2}$, that is, $\mathbf{u}=\left(\PT{u}_1,\PT{u}_2,\PT{u}_3\right)$ and $\PT{u}_i\in \vLp{2}{2}$, $i=1,2,3$. For $\PT{f}\in\vLp{2}{2}$, we define the ``inner product'' between $\PT{f}$ and $\mathbf{u}$ by
\begin{equation*}
	\InnerL{\PT{f},\mathbf{u}}:=
	\begin{pmatrix}
		\InnerL{\PT{f},\PT{u}_1}, & \InnerL{\PT{f},\PT{u}_2}, & \InnerL{\PT{f},\PT{u}_3}
	\end{pmatrix}.
\end{equation*}
For $\PT{f},\PT{g},\PT{h}\in \vLp{2}{2}$ and $\PT{y}\in\sph{2}$,
\begin{equation*}
	\InnerL{\PT{f},\PT{g}\otimes\PT{h}(\PT{y})} = \InnerL{\PT{f},\PT{g}}\PT{h}(\PT{y})^{T},\quad
	\InnerL{\PT{h}(\PT{y})\otimes\PT{g},\PT{f}} = \PT{h}(\PT{y})\InnerL{\conj{\PT{g}},\PT{f}}=\InnerL{\conj{\PT{f}},\PT{g}}\PT{h}(\PT{y}),
\end{equation*}
where $\PT{h}(\PT{y})^{T}:=(\PT{h}(\PT{y}))^{T}$ for simplicity. We will use this notation in the paper if no confusion arises.

Let $\LBo$ be the Laplace-Beltrami operator on $\sph{2}$.  Complex-valued spherical harmonics $\shY$, $\ell\in\Nz, m=-\ell,\dots,\ell$, are eigenfunctions of $\LBo$ with eigenvalues $\eigvm:=\ell(\ell+1)$ satisfying
\begin{equation*}
	\LBo \:\shY = \eigvm\: \shY.
\end{equation*}
Let $\nabla$ be the gradient on $\Rd[3]$. The \emph{surface-gradient} on $\sph{2}$ is defined as $\sfgrad:=P_{x}\nabla$ with the matrix $P_{\PT{x}} :=\imat-\PT{x}\PT{x}^{T}$, where $\imat$ is the identity matrix. The \emph{surface-curl} on $\sph{2}$ is $\sfcurl =Q_{x}\nabla$ with the matrix
\begin{equation*}
	Q_{\PT{x}} :=
	\begin{pmatrix}
		0 & -x_3 & x_2\\
		x_3 & 0 & -x_1\\
		-x_2 & x_1 & 0
	\end{pmatrix}.
\end{equation*}
Then, it holds $\sfgrad\cdot \sfgrad = \sfcurl\cdot \sfcurl = -\LBo$.
The \emph{divergence-free vector spherical harmonics} are
\begin{equation*}
	\dsh = \sfcurl \shY/\sqrt{\eigvm}, \quad \ell\ge1.
\end{equation*}
The \emph{curl-free vector spherical harmonics} are
\begin{equation*}
	\csh = \sfgrad \shY/\sqrt{\eigvm}, \quad \ell\ge1.
\end{equation*}
The set $\{(\dsh,\csh) \setsep \ell\ge1,m=-\ell,\dots,\ell\}$ forms an orthonomal basis of $\vLp{2}{2}$, see e.g. \cite{FrSc2009, FuWr2009}.
Let $\dfco{\PT{f}}:=\InnerL{\PT{f},\dsh}$ and $\cfco{\PT{f}}:=\InnerL{\PT{f},\csh}$ be the (divergence-free and curl-free) Fourier coefficients for $\PT{f}\in\vLp{2}{2}$.
For $\PT{f}\in\vLp{2}{2}$, it holds
\begin{equation*}
	\PT{f} = \sum_{\ell=1}^{\infty}\sum_{m=-\ell}^{\ell} \left(\dfco{\PT{f}}\: \dsh + \cfco{\PT{f}}\: \csh \right)
\end{equation*}
in $\vLp{2}{2}$ space.
For $\ell\ge1$, the divergence-free and curl-free \emph{Legendre rank-$2$ kernels} of degree $\ell$ are the tensor fields
\begin{equation*}
	\dlg(\PT{x},\PT{y}) = \frac{2\ell+1}{4\pi\eigvm}(\sfgrad)_{\PT{x}}\otimes(\sfgrad)_{\PT{y}}\Legen(\PT{x}\cdot\PT{y}),\quad
	\clg(\PT{x},\PT{y}) = \frac{2\ell+1}{4\pi\eigvm}{\sfcurl}_{\PT{x}}\otimes{\sfcurl}_{\PT{y}}\Legen(\PT{x}\cdot\PT{y}),\quad \PT{x},\PT{y}\in\sph{2}.
\end{equation*}
For vector spherical harmonics and Legendre rank-$2$ kernels, the following addition theorem holds
\begin{equation*}
	\sum_{m=-\ell}^{\ell}\dsh(\PT{x})\otimes\dsh(\PT{y}) = \dlg(
	\PT{x},\PT{y}),\quad
	\sum_{m=-\ell}^{\ell}\csh(\PT{x})\otimes\csh(\PT{y}) = \clg(
	\PT{x},\PT{y})
\end{equation*}
for $\ell\ge 1$ and $\PT{x},\PT{y}\in\sph{2}$, see \cite[Theorem~5.3]{FrSc2009}.

\section{Tight Frame of Tensor Needlets}\label{sec3}
In this section, we construct a localized tight frame on tangent fields $\vrf$ that are defined on $\sph{2}$ and taking values in $\Rd[d]$. Here we inherit the notation and concepts of \cite{HaZh2015,LeSlWaWo2017, WaLeSlWo2017,WaZh2018}.

Let $\Lpr{2}$ be the space of complex-valued square-integrable functions on $\Rone$, and let $\ellp{2}$ be a space of square-summable sequences on $\Z$. The Fourier transform of $\phi\in\Lpr{2}$ is $\FT{\phi}(\xi):=\int_{\Rone}\phi(x)e^{-2\pi \imu x\xi}\IntD{\xi}$, $\xi\in\Rone$. The Fourier series for $h\in \ellp{2}$ is $\FS{h}:=\sum_{k\in\Z}h_k e^{-2\pi \imu k\xi}$, $\xi\in\Rone$.
Let \begin{equation}\label{eq:scal.sys}
	\digamma:=\{\scala,\scalb^{1},\dots,\scalb^{r}\}
\end{equation}
be a set of functions in $\Lpr{2}$. We call $\digamma$ a set of \emph{generating functions} if it is associated with a \emph{filter bank}
\begin{equation}\label{eq:filtbk}
	\filtbk := \{\maska,\maskb[1],\dots,\maskb[r]\}\subset \ellp{2}
\end{equation}
satisfying
\begin{equation}\label{eq:scaling.mask}
	\FT{\scala}(2\xi)=\FT{\scala}(\xi)\FS{\maska}(\xi),\quad
	\FT{\scalb^{n}}(2\xi) = \FT{\scala}(\xi)\FS{\maskb}(\xi),\quad n=1,\dots,r.
\end{equation}

\begin{definition}\label{defn:cfr}
	For a set of scaling functions $\digamma$ given by \eqref{eq:scal.sys}, the ($\C^{3\times 3}$-valued) continuous tensor needlets for tangent fields on $\sph{2}$ are
	\begin{align*}
	\cfra(\PT{x}) &:= \sum_{\ell=1}^{\infty} \FS{\scala}\left(\frac{\ell}{2^{j}}\right)\sum_{m=-\ell}^{\ell}\Bigl(\dsh(\PT{x})\otimes\dsh(\PT{y}) + \csh(\PT{x})\otimes\csh(\PT{y})\Bigr)\\
	&=\sum_{\ell=1}^{\infty} \FS{\scala}\left(\frac{\ell}{2^{j}}\right)\Bigl(\dlg(\PT{x},\PT{y}) + \clg(\PT{x},\PT{y})\Bigr),\\
	\cfrb{n}(\PT{x}) &:= \sum_{\ell=1}^{\infty} \FS{\scalb^{n}}\left(\frac{\ell}{2^{j}}\right)\sum_{m=-\ell}^{\ell}\Bigl(\dsh(\PT{x})\otimes\dsh(\PT{y}) + \csh(\PT{x})\otimes\csh(\PT{y})\Bigr)\\
	&=\sum_{\ell=1}^{\infty} \FS{\scalb^{n}}\left(\frac{\ell}{2^{j}}\right)\Bigl(\dlg(\PT{x},\PT{y}) + \clg(\PT{x},\PT{y})\Bigr), \quad n=1,\dots,r.
	\end{align*}
\end{definition}

For $J=0,1,\dots$, the set of tensor needlets
\begin{equation}\label{eq:cfrsys}
	\cfrsys:=\{\cfra[J,\PT{y}];\cfrb{1},\dots,\cfrb{r}: j\geq J, \PT{y}\in\sph{2}\}
\end{equation}
is called the \emph{continuous needlet system} starting from scale level $J$ with the filter bank $\filtbk$.
Each continuous needlet is a tensor field on $\sph{2}$.
We call the continuous tensor needlet system $\cfrsys$ a tight needlet system for $\vLp{2}{2}$ if $\cfrsys\subset\vLp{2}{2}$ and if for any $\PT{f} \in \vLp{2}{2}$,
\begin{equation}\label{eq:tight.cfrsys}
	\PT{f} = \int_{\sph{2}}\cfra[J,\PT{y}]\InnerL{\PT{f},\cfra[J,\PT{y}]}^{T}\IntDiff{y}
	 + \sum_{j=J}^{\infty}\sum_{n=1}^{r}\int_{\sph{2}}\cfrb{n}\InnerL{\PT{f},\cfrb{n}}^{T}\IntDiff{y},
\end{equation}
in $L_2$ sense or equivalently,
\begin{equation*}\label{eq:tight.cfrsys.coeff}
	\norm{\PT{f}}{\vLp{2}{2}}^{2}=\int_{\sph{2}}\left|\InnerL{\PT{f},\cfra[J,\PT{y}]}\right|^{2}\IntDiff{y}
	+ \sum_{j=J}^{\infty}\sum_{n=1}^{r}\int_{\sph{2}}\left|\InnerL{\PT{f},\cfrb{n}}\right|^{2}\IntDiff{y}.
\end{equation*}
The elements of the tight continuous tensor needlet system $\cfrsys$ will be called \emph{continuous tight tensor needlets} for $\vLp{2}{2}$.

\begin{theorem}\label{thm:equiv.tight.cfrsys}
	Let $J_0\in \Z$ be an integer, $\cfrsys(\filtbk)$, $J\geq J_0$, given in \eqref{eq:cfrsys}, be a sequence of continuous tensor needlet systems whose continuous tensor needlets are given by Definition~\ref{defn:cfr} with filter bank $\filtbk$ given by \eqref{eq:filtbk} and scaling functions satisfying \eqref{eq:scaling.mask}. Then the following statements are equivalent.\\
(i) The continuous needlet systems $\cfrsys(\filtbk)$ are tight frames for $\vLp{2}{2}$ for $J\geq J_0$, i.e. \eqref{eq:tight.cfrsys} holds for all $J\geq J_0$.\\
(ii) For each $\PT{f}\in \vLp{2}{2}$, the following identities hold.
\begin{align}
	&\lim_{j\to\infty} \normB{\int_{\sph{2}}\cfra \InnerL{\PT{f},\cfra}^{T}\IntDiff{y}-\PT{f}}{\vLp{2}{2}} = 0,\label{eq:cfrpra.f.L2converg}\\
	& \hspace{0.2cm}\mbox{and for all $j\geq J_0$},\notag\\
	& \int_{\sph{2}}\cfra[j+1,\PT{y}]\InnerL{\PT{f},\cfra[j+1,\PT{y}]}^{T}\IntDiff{y}
	= \int_{\sph{2}}\cfra \InnerL{\PT{f},\cfra}^{T}\IntDiff{y}
	+ \sum_{n=1}^{r}\int_{\sph{2}}\cfrb{n}\InnerL{\PT{f},\cfrb{n}}^{T}\IntDiff{y}.\label{eq:cfrpraj.j+1}
\end{align}
(iii) For each $\PT{f}\in \vLp{2}{2}$, the following identities hold.
\begin{align*}
	&\lim_{j\to\infty} \int_{\sph{2}}\left|\InnerL{\PT{f},\cfra}\right|^{2}\IntDiff{y}= \norm{\PT{f}}{\vLp{2}{2}}^{2},\\
	& \hspace{-0.5cm}\mbox{and for all $j\geq J_0$},\\
	& \int_{\sph{2}}\left|\InnerL{\PT{f},\cfra[j+1,\PT{y}]}\right|^{2}\IntDiff{y}
	= \int_{\sph{2}}\left|\InnerL{\PT{f},\cfra}\right|^{2}\IntDiff{y}
	+ \sum_{n=1}^{r}\int_{\sph{2}}\left|\InnerL{\PT{f},\cfrb{n}}\right|^{2}\IntDiff{y}.
\end{align*}
(iv) The scaling functions in $\digamma$ satisfy
\begin{align}
	&\lim_{j\to\infty}\left|\FT{\scala}\left(\frac{\eigvm}{2^{j}}\right)\right| =1, \quad \ell \geq1,\label{eq:scala.lim}\\
	& \left|\FT{\scala}\left(\frac{\eigvm}{2^{j+1}}\right)\right|^{2}
	= \left|\FT{\scala}\left(\frac{\eigvm}{2^{j}}\right)\right|^{2}
	+ \sum_{n=1}^{r}\left|\FT{\scalb^{n}}\left(\frac{\eigvm}{2^{j}}\right)\right|^{2},\quad \ell\geq \ell, \; j\geq J_0.\label{eq:scal.j.j+1}
\end{align}
(v) The refinable function $\scala$ satisfies \eqref{eq:scala.lim} and for all $\ell$ satisfying $\FS{\maska}\left(\frac{\eigvm}{2^{j}}\right)\neq0$ and $j\geq J_0+1$ the filters in the filter bank $\filtbk$ satisfy
\begin{equation*}
	\left|\FS{\maska}\left(\frac{\eigvm}{2^{j}}\right)\right|^2
	+ \sum_{n=1}^{r}\left|\FS{\maskb}\left(\frac{\eigvm}{2^{j}}\right)\right|^2=1.
\end{equation*}
\end{theorem}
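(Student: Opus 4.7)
The plan is to reduce every statement to a condition on the scalar Fourier multipliers $|\FT{\scala}(\ell/2^{j})|^{2}$ and $|\FT{\scalb^{n}}(\ell/2^{j})|^{2}$, exploiting the fact that the VSH basis $\{\dsh,\csh\}$ diagonalises every operator appearing. Once the operator $T_{j}\PT{f}(\PT{x}) := \int_{\sph{2}}\cfra(\PT{x})\InnerL{\PT{f},\cfra}^{T}\IntDiff{y}$ and its detail counterparts $S_{j}^{n}$ built from $\cfrb{n}$ are identified as Fourier multipliers, the cycle of equivalences becomes a sequence of routine diagonalisation arguments.

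First I would carry out the key Fourier computation. Writing $\PT{f} = \sum_{\ell,m}[\dfco{\PT{f}}\,\dsh + \cfco{\PT{f}}\,\csh]$, I would plug Definition~\ref{defn:cfr} for $\cfra$ into $\InnerL{\PT{f},\cfra}$, use the orthonormality of $\{\dsh,\csh\}$ on $\sph{2}$, and keep track of conjugation and transposition of the rank-two kernels $\PT{a}\otimes\PT{b}=\PT{a}\conj{\PT{b}}^{T}$ to obtain
\begin{equation*}
\InnerL{\PT{f},\cfra}^{T} = \sum_{\ell,m}\conj{\FT{\scala}(\ell/2^{j})}\bigl[\dfco{\PT{f}}\,\dsh(\PT{y}) + \cfco{\PT{f}}\,\csh(\PT{y})\bigr].
\end{equation*}
Substituting this back, using $(\PT{a}\otimes\PT{b})\PT{c} = (\PT{c}\cdot\PT{b})\PT{a}$, and integrating over $\PT{y}$ collapses the resulting double sum via the cross-term orthogonality between divergence-free and curl-free harmonics together with the diagonal pick from $\delta_{\ell,\ell'}\delta_{m,m'}$, giving
\begin{equation*}
T_{j}\PT{f} = \sum_{\ell,m}|\FT{\scala}(\ell/2^{j})|^{2}\bigl[\dfco{\PT{f}}\,\dsh + \cfco{\PT{f}}\,\csh\bigr],\qquad \int_{\sph{2}}|\InnerL{\PT{f},\cfra}|^{2}\IntDiff{y} = \sum_{\ell,m}|\FT{\scala}(\ell/2^{j})|^{2}\bigl(|\dfco{\PT{f}}|^{2}+|\cfco{\PT{f}}|^{2}\bigr),
\end{equation*}
with entirely analogous formulas for $\cfrb{n}$ involving $|\FT{\scalb^{n}}|^{2}$. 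This is the crux of the proof.

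With these Fourier representations in hand, I would chain the equivalences. (i)$\Leftrightarrow$(ii): telescoping the one-step identity in (ii) gives the finite reconstruction at level $J$, and the $L_{2}$ limit in (ii) closes the tail to recover (i); conversely, subtracting (i) at consecutive levels $J$ and $J+1$ produces the one-step identity and (i) with $J\to\infty$ yields the limit. (ii)$\Leftrightarrow$(iii): both $T_{j}$ and $S_{j}^{n}$ are positive self-adjoint Fourier multipliers, so the operator identity in (ii) and the scalar quadratic form in (iii) are equivalent by polarisation. (iii)$\Leftrightarrow$(iv): the integrals in (iii) are exactly $\sum_{\ell,m}|\FT{\scala}(\ell/2^{j})|^{2}c_{\ell,m}$ with $c_{\ell,m} = |\dfco{\PT{f}}|^{2}+|\cfco{\PT{f}}|^{2}$; testing with $\PT{f}=\dsh$ or $\PT{f}=\csh$ isolates a single $(\ell,m)$ and forces (iv) pointwise, while the converse follows from (iv) and dominated convergence, for which I would use that $\FT{\scala}$ is bounded on the used grid (a standard regularity hypothesis in the needlet construction). (iv)$\Leftrightarrow$(v): substituting the refinement relations \eqref{eq:scaling.mask} at $\xi=\ell/2^{j+1}$, namely $\FT{\scala}(\ell/2^{j}) = \FT{\scala}(\ell/2^{j+1})\FS{\maska}(\ell/2^{j+1})$ and similarly for $\scalb^{n}$, into the two-scale identity in (iv) factors out $|\FT{\scala}(\ell/2^{j+1})|^{2}$, which cancels precisely where it is nonzero, leaving the partition-of-unity statement in (v).

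The main obstacle I expect is the matrix and tensor bookkeeping in the first step. Since the needlets are $\C^{3\times 3}$-valued, the pairing $\InnerL{\cdot,\cdot}$ between a vector field and a tensor field is a row vector, and the orthogonality cancellations between divergence-free and curl-free parts must be tracked carefully against the paper's conjugation and transposition conventions. Once that collapse identity is clean, the remainder of the theorem is a straightforward Fourier-multiplier and polarisation exercise.
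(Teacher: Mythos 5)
Your proposal is correct and follows essentially the same route as the paper: the central step in both is the diagonalisation of the operators $\int_{\sph{2}}\cfra\InnerL{\PT{f},\cfra}^{T}\IntDiff{y}$ and their detail counterparts as Fourier multipliers with symbols $|\FT{\scala}|^{2}$ and $|\FT{\scalb^{n}}|^{2}$ via VSH orthonormality, followed by telescoping for (i)$\Leftrightarrow$(ii), polarisation for the quadratic-form version, and substitution of the refinement relations \eqref{eq:scaling.mask} for (iv)$\Leftrightarrow$(v). The only difference is a trivial reordering of the chain (you pass through (iii) to reach (iv), the paper goes (ii)$\Leftrightarrow$(iv) directly and cites (iii)$\Leftrightarrow$(iv) as standard), which does not change the substance.
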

\begin{proof}
	We first show that ``(i) $\Longleftrightarrow$ (ii)''. For $j\in\N$, let
	\begin{equation*}
		\cfrpra{j}\PT{f} := \int_{\sph{2}}\cfra[j,\PT{y}]\InnerL{\PT{f},\cfra[j,\PT{y}]}^{T}\IntDiff{y},\quad
		\cfrprb{j}\PT{f} := \int_{\sph{2}}\cfrb{n}\InnerL{\PT{f},\cfrb{n}}^{T}\IntDiff{y},\quad \PT{f}\in\vLp{2}{2}.
	\end{equation*}
	
	``(i) $\Longleftarrow$ (ii)''.
	If $\cfrsys(\filtbk)$ is a tight frame, then for all $\PT{f}\in\vLp{2}{2}$ and all $J\ge J_{0}$
	\begin{equation*}
		\PT{f} = \cfrpra{J}\PT{f} + \sum_{j=J}^{\infty}\sum_{n=1}^{r}\cfrprb{j}\PT{f} = \cfrpra{J+1}\PT{f} + \sum_{j=J+1}^{\infty}\sum_{n=1}^{r}\cfrprb{j}\PT{f}
	\end{equation*}
  in $L_2$ sense. Thus, for $J\ge J_{0}$,
	\begin{equation}\label{eq:cfrpraJ.J+1}
		\cfrpra{J+1}\PT{f} = \cfrpra{J}\PT{f} + \sum_{n=1}^{r}\cfrprb{J}\PT{f}.
	\end{equation}
	This gives \eqref{eq:cfrpraj.j+1}. Recursively using \eqref{eq:cfrpraJ.J+1} we obtain
	\begin{equation}\label{eq:cfrpraJ.m+1}
		\cfrpra{m+1}\PT{f} = \cfrpra{J}\PT{f} + \sum_{j=J}^{m}\sum_{n=1}^{r}\cfrprb{j}\PT{f}
	\end{equation}
	for all $m\ge J$ and $J\ge J_{0}$. If $m\to \infty$ in \eqref{eq:cfrpraJ.m+1}, we then obtain
	\begin{equation*}
		\lim_{m\to\infty} \cfrpra{m+1}\PT{f} = \cfrpra{J}\PT{f} + \sum_{j=J}^{\infty}\sum_{n=1}^{r}\cfrprb{j}\PT{f}=\PT{f},\quad \PT{f}\in\vLp{2}{2}.
	\end{equation*}
	This gives \eqref{eq:cfrpra.f.L2converg}.
	
	``(i)$\Longrightarrow$(ii)''. Using \eqref{eq:cfrpraj.j+1} recursively, we obtain \eqref{eq:cfrpraJ.m+1}. If $m$ tends to infinity, with the convergence of \eqref{eq:cfrpra.f.L2converg}, we obtain the tightness of $\cfrsys$ for $J\ge J_0$.
	
	We now prove ``(ii)$\Longleftrightarrow$(iv)''. By Definition~\ref{defn:cfr}, for $n=1,\dots,r$, $j=0,1,\dots$ and $\PT{y}\in\sph{2}$,
	\begin{align*}
		\InnerL{\PT{f},\cfra}^{T} &= \sum_{\ell=0}^{\infty}\conj{\FT{\scala}\left(\frac{\eigvm}{2^{j}}\right)}\sum_{m=-\ell}^{\ell}\left(\dfco{\PT{f}}\dsh(\PT{y})+\cfco{\PT{f}}\csh(\PT{y})\right)\\
			\InnerL{\PT{f},\cfrb{n}}^{T} &= \sum_{\ell=0}^{\infty}\conj{\FT{\scalb^{n}}\left(\frac{\eigvm}{2^{j}}\right)}\sum_{m=-\ell}^{\ell}\left(\dfco{\PT{f}}\dsh(\PT{y})+\cfco{\PT{f}} \csh(\PT{y})\right).
	\end{align*}
	Then,
	\begin{align}
		\cfrpra{j}\PT{f} &= \int_{\sph{2}}\sum_{\ell=1}^{\infty}\FT{\scala}\left(\frac{\eigvm}{2^{j}}\right)\sum_{m=-\ell}^{\ell}\bigl(\dsh\otimes\dsh(\PT{y})+\csh\otimes\csh(\PT{y})\bigr)\notag\\
		&\hspace{2cm}\times \sum_{\ell=1}^{\infty}\conj{\FT{\scala}\left(\frac{\eigvm}{2^{j}}\right)}\sum_{m=-\ell}^{\ell}\left(\dfco{\PT{f}}\dsh(\PT{y})+\cfco{\PT{f}}\csh(\PT{y})\right)\IntDiff{y}\notag\\
		&=\sum_{\ell=1}^{\infty}\sum_{\ell'=1}^{\infty}\conj{\FT{\scala}\left(\frac{\eigvm}{2^{j}}\right)}\FT{\scala}\left(\frac{\eigvm[\ell']}{2^{j}}\right)\sum_{m=-\ell}^{\ell}\sum_{m'=-\ell'}^{\ell'}\left(\dsh\InnerL{\dsh[\ell' m'],\dsh}\dfco[\ell' m']{\PT{f}}
		+\dsh\InnerL{\csh[\ell' m'],\dsh}\cfco[\ell' m']{\PT{f}}\right.\notag\\
		&\hspace{2cm}\left. + \csh\InnerL{\dsh[\ell' m'],\csh}\dfco[\ell' m']{\PT{f}} + \csh\InnerL{\csh[\ell' m'],\csh}\cfco[\ell' m']{\PT{f}}\right)\notag\\
		&=\sum_{\ell=1}^{\infty}\left|\FT{\scala}\left(\frac{\eigvm}{2^{j}}\right)\right|^{2}\sum_{m=-\ell}^{\ell}\left(\dfco{\PT{f}}\dsh+\cfco{\PT{f}}\csh\right).\label{eq:cfraprj.f}
	\end{align}
Similarly,
\begin{equation*}
	\cfrprb{j}\PT{f}=\sum_{\ell=1}^{\infty}\left|\FT{\scalb^{n}}\left(\frac{\eigvm}{2^{j}}\right)\right|^{2}\sum_{m=-\ell}^{\ell}\left(\dfco{\PT{f}}\dsh+\cfco{\PT{f}}\csh\right).
\end{equation*}

Thus,
\begin{align}
	\dfco{\bigl(\cfrpra{j}\PT{f}\bigr)} &= \left|\FT{\scala}\left(\frac{\eigvm}{2^{j}}\right)\right|^{2}\dfco{\PT{f}},\quad \cfco{\bigl(\cfrpra{j}\PT{f}\bigr)} = \left|\FT{\scala}\left(\frac{\eigvm}{2^{j}}\right)\right|^{2}\cfco{\PT{f}},\label{eq:fco.cfaprj}\\
		\dfco{\bigl(\cfrprb{j}\PT{f}\bigr)} &= \left|\FT{\scalb^{n}}\left(\frac{\eigvm}{2^{j}}\right)\right|^{2}\dfco{\PT{f}},\quad \cfco{\bigl(\cfrprb{j}\PT{f}\bigr)} = \left|\FT{\scalb^{n}}\left(\frac{\eigvm}{2^{j}}\right)\right|^{2}\cfco{\PT{f}}.\label{eq:fco.cfbprj}
\end{align}
This means that \eqref{eq:cfrpraj.j+1} is equivalent to \eqref{eq:scal.j.j+1}.
Also, \eqref{eq:fco.cfaprj} and \eqref{eq:fco.cfbprj} give
\begin{equation*}
	\normb{\cfrpra{j}\PT{f}-\PT{f}}{\vLp{2}{2}}^{2}=\sum_{\ell=0}^{\infty}\left(\left|\FT{\scala}\left(\frac{\eigvm}{2^{j}}\right)\right|^{2}-1\right)^{2}\sum_{m=-\ell}^{\ell}\left(\left|\dfco{\PT{f}}\right|^{2} + \left|\cfco{\PT{f}}\right|^{2}\right)
\end{equation*}
and we obtain that \eqref{eq:cfrpra.f.L2converg} is equivalent to \eqref{eq:scala.lim}.
	
The equivalence between (iii) and (iv) is well-known, see e.g. \cite{WaZh2018}. The equivalence between (iv) and (v) follows from \eqref{eq:scaling.mask}.
\end{proof}

A \emph{quadrature rule} on $\sph{2}$ is a set of $N$ pairs of weights and points on $\sph{2}$
\begin{equation*}
	\QN[N]=\{(\wN[i],\pN[i])\setsep \wN[i]\in\Rone, \pN[i]\in\sph{2}, i=1,\dots,N\}.
\end{equation*}
For $L\geq0$, let $\Pi_{L}:={\rm span}\{Y_{\ell,m}:\ell=0,\dots, L,\; m=-\ell,\dots,\ell\}$ be a polynomial space of degree $L$. Elements of $\Pi_L$ are called polynomials of degree $L$. The quadrature rule $\QN[N]$ is called exact for polynomials of degree $L$ if for any $ P\in\Pi_L$
\begin{equation*}
	\int_{\sph{2}}P(\PT{x})\IntDiff{x} = \sum_{i=1}^{N}\wN[i]P(\pN[i]).
\end{equation*}

To obtain semi-discrete needlets, we discretize the integrals in \eqref{eq:tight.cfrsys} by quadrature rules for different scales.

\begin{definition}\label{defn:fr}
	Let  a set of scaling functions $\digamma$ be given by \eqref{eq:scal.sys} and a sequence of quadrature rules $\QN=\{(\wN,\pN)\}_{k=1}^{N_j}$ be exact for polynomials of degree $2^{2j}$. Then, for $j=0,1,\dots,$ and $k=1,\dots,N_{j}$, the ($\C^{d\times d}$-valued semi-discrete) tensor needlets for tangent fields on $\sph{2}$ are given by
	\begin{align*}
	\fra(\PT{x}) &:= \sqrt{\wN}\:\cfra[j,\pN](\PT{x})\\
	&=\sqrt{\wN}\sum_{\ell=1}^{\infty} \FS{\scala}\left(\frac{\eigvm}{2^{j}}\right)\sum_{m=-\ell}^{\ell}\left(\dsh(\PT{x})\otimes\dsh(\pN) + \csh(\PT{x})\otimes\csh(\pN)\right)\\
	&=\sqrt{\wN}\sum_{\ell=1}^{\infty} \FS{\scala}\left(\frac{\eigvm}{2^{j}}\right)\left(\dlg(\PT{x},\pN) + \clg(\PT{x},\pN)\right),\\
	&\hspace{-1.8cm}\mbox{and for $n=1,\dots,r$, $j=0,1,\dots,$ and $k'=1,\dots,N_{j+1}$},\\
	\frb[j,k']{n}(\PT{x}) &:= \sqrt{\wN[j+1,k']}\:\cfrb[j,{\pN[j+1,k']}]{n}(\PT{x})\\
	&=\sqrt{\wN[j+1,k']}\sum_{\ell=1}^{\infty} \FS{\scalb^{n}}\left(\frac{\eigvm}{2^{j}}\right)\sum_{m=-\ell}^{\ell}\left(\dsh(\PT{x})\otimes\dsh(\pN[j+1,k']) + \csh(\PT{x})\otimes\csh(\pN[j+1,k'])\right)\\
	&=\sqrt{\wN[j+1,k']}\sum_{\ell=1}^{\infty} \FS{\scalb^{n}}\left(\frac{\eigvm}{2^{j}}\right)\left(\dlg(\PT{x},\pN[j+1,k']) + \clg(\PT{x},\pN[j+1,k'])\right).
	\end{align*}
\end{definition}

For $J=0,1,\dots$, the set of needlets
\begin{equation}\label{eq:frsys}
	\frsys:=\frsys(\filtbk;\{\QN\}_{j\geq J}):=\{\fra[J,k];\frb[j,k']{1},\dots,\frb[j,k']{r}: j\geq J,\: k=1,\dots,N_{J},\: k'=1,\dots,N_{j+1}\}
\end{equation}
is called the \emph{(semi-discrete) tensor needlet system} starting from the scale level $J$ with the filter bank $\filtbk$.
The tensor needlet system $\frsys$ is called a tight needlet system for $\Lp{2}{2}$ if $\frsys\subset\Lp{2}{2}$ and for any $\PT{f} \in \vLp{2}{2}$
\begin{equation}\label{eq:tight.frsys}
	\PT{f} = \sum_{k=1}^{N_J}\fra[J,k]\InnerL{\PT{f},\fra[J,k]}^{T}
	 + \sum_{j=J}^{\infty}\sum_{k=1}^{N_{j+1}}\sum_{n=1}^{r}\frb{n}\InnerL{\PT{f},\frb{n}}^{T}
\end{equation}
in $L_2$ sense or equivalently,
\begin{equation*}\label{eq:tight.frsys.coeff}
	\norm{\PT{f}}{\vLp{2}{2}}^{2}=\sum_{k=1}^{N_J}\left|\InnerL{\PT{f},\fra[J,k]}\right|^{2}
	 + \sum_{j=J}^{\infty}\sum_{k=1}^{N_{j+1}}\sum_{n=1}^{r}\left|\InnerL{\PT{f},\frb{n}}\right|^{2}.
\end{equation*}
The elements of the tight tensor needlet system $\frsys$ are said to be \emph{(semi-discrete) tight tensor needlets} for $\vLp{2}{2}$.

\begin{theorem}\label{thm:equiv.tight.frsys}
	Let $\frsys(\filtbk;\{\QN\}_{j\geq J})$, $J\geq J_0$, given in \eqref{eq:frsys} be a sequence of tensor needlet systems with elements given by Definition~\ref{defn:cfr} and the filter bank $\filtbk$ given by \eqref{eq:filtbk}, scaling functions satisfying \eqref{eq:scaling.mask}. Let a sequence of quadrature rules $\QN=\{(\wN,\pN)\}_{k=1}^{N_j}$ be exact for degree $2^{j+1}$, $j=0,1,\dots$. Then, the following statements are equivalent.\\
(i) The continuous needlet systems $\frsys$ are tight frames for $\vLp{2}{2}$ for $J\geq J_0$, i.e. \eqref{eq:tight.frsys} holds for all $J\geq J_0$.\\
(ii) For each $\PT{f}\in \vLp{2}{2}$, the following identities hold.
\begin{align*}
	&\lim_{j\to\infty} \normB{\sum_{k=1}^{N_j}\fra \InnerL{\PT{f},\fra}^{T}-\PT{f}}{\vLp{2}{2}} = 0\\
	& \hspace{-0.5cm}\mbox{and for $j\geq J_0$}\\
	& \sum_{k=1}^{N_{j+1}}\fra[j+1,k]\InnerL{\PT{f},\fra[j+1,k]}^{T}
	= \sum_{k=1}^{N_j}\fra\InnerL{\PT{f},\fra}^{T}
	+ \sum_{k=1}^{N_{j+1}}\sum_{n=1}^{r}\frb{n}\InnerL{\PT{f},\frb{n}}^{T}.
\end{align*}
(iii) For each $\PT{f}\in \vLp{2}{2}$, the following identities hold.
\begin{align*}
	&\lim_{j\to\infty} \sum_{k=1}^{N_{j}}\left|\InnerL{\PT{f},\fra}\right|^{2}= \norm{\PT{f}}{\vLp{2}{2}}^{2}\\
	& \hspace{-0.5cm}\mbox{and for $j\geq J_0$}\\
	& \sum_{k=1}^{N_{j+1}}\left|\InnerL{\PT{f},\fra[j+1,k]}\right|^{2}
	= \sum_{k=1}^{N_{j}}\left|\InnerL{\PT{f},\fra}\right|^{2}
	+ \sum_{k=1}^{N_{j+1}}\sum_{n=1}^{r}\left|\InnerL{\PT{f},\frb{n}}\right|^{2}.
\end{align*}
(iv) The scaling functions in $\digamma$ satisfy
\begin{align}
	&\lim_{j\to\infty}\left|\FT{\scala}\left(\frac{\eigvm}{2^{j}}\right)\right| =1, \quad \ell \geq1,\label{eq:scala.lim.semidis}\\
	& \left|\FT{\scala}\left(\frac{\eigvm}{2^{j+1}}\right)\right|^{2}
	= \left|\FT{\scala}\left(\frac{\eigvm}{2^{j}}\right)\right|^{2}
	+ \sum_{n=1}^{r}\left|\FT{\scalb^{n}}\left(\frac{\eigvm}{2^{j}}\right)\right|^{2},\quad \ell\geq \ell, \; j\geq J_0.\notag \label{thmeq:cfr.scal.j.j1}
\end{align}
(v) The refinable function $\scala$ satisfies \eqref{eq:scala.lim.semidis} and for all $\ell$ satisfying $\FS{\maska}\left(\frac{\eigvm}{2^{j}}\right)\neq0$ and $j\geq J_0+1$ the filters in the filter bank $\filtbk$ satisfy
\begin{equation}\label{eq:filters_union}
	\left|\FS{\maska}\left(\frac{\eigvm}{2^{j}}\right)\right|^2
	+ \sum_{n=1}^{r}\left|\FS{\maskb}\left(\frac{\eigvm}{2^{j}}\right)\right|^2=1.
\end{equation}
\end{theorem}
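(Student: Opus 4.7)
The plan is to mirror the proof of Theorem~\ref{thm:equiv.tight.cfrsys}, with polynomial exactness of the quadrature rules acting as the bridge that collapses every quadrature sum to its continuous-integral counterpart. The centerpiece of the reduction is the identity
\begin{equation*}
	\sum_{k=1}^{N_j}\fra \InnerL{\PT{f},\fra}^{T} \;=\; \sum_{k=1}^{N_j}\wN\, \cfra[j,\pN]\, \InnerL{\PT{f},\cfra[j,\pN]}^{T} \;=\; \cfrpra{j}\PT{f},
\end{equation*}
together with its $\scalb^n$-analogue $\sum_{k=1}^{N_{j+1}}\frb{n}\InnerL{\PT{f},\frb{n}}^{T} = \cfrprb{j}\PT{f}$. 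To establish the first identity, fix $\PT{x}\in\sph{2}$ and view the integrand $\cfra[j,\PT{y}](\PT{x})\InnerL{\PT{f},\cfra[j,\PT{y}]}^{T}$ as a function of $\PT{y}$. Under the standard compact-support assumption that $\FT{\scala}$ and $\FT{\scalb^n}$ vanish outside $[0,2]$, only indices $\ell \leq 2^j$ contribute; since each entry of $\dsh,\csh$ is a spherical polynomial of degree $\ell$, each entry of the tensor integrand is a spherical polynomial in $\PT{y}$ of degree at most $2\cdot 2^j = 2^{j+1}$. The assumed exactness of $\QN$ at degree $2^{j+1}$ then yields the desired equality, and the $\scalb^n$-analogue uses $\QN[j+1]$ in the same way.

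With these identifications in place, the entire argument of Theorem~\ref{thm:equiv.tight.cfrsys} transfers with almost no additional work. The telescoping relation (cf.~\eqref{eq:cfrpraJ.J+1}--\eqref{eq:cfrpraJ.m+1}) supplies (i)$\Leftrightarrow$(ii); the Fourier-coefficient formulas \eqref{eq:fco.cfaprj}--\eqref{eq:fco.cfbprj} translate (ii) into the scalar conditions (iv) via the vanishing of
\begin{equation*}
	\normb{\cfrpra{j}\PT{f}-\PT{f}}{\vLp{2}{2}}^{2}
	=\sum_{\ell=1}^{\infty}\Bigl(|\FT{\scala}(\eigvm/2^{j})|^{2}-1\Bigr)^{2}\sum_{m=-\ell}^{\ell}\Bigl(|\dfco{\PT{f}}|^{2} + |\cfco{\PT{f}}|^{2}\Bigr);
\end{equation*}
and (iv)$\Leftrightarrow$(v) is a direct application of the refinement relations \eqref{eq:scaling.mask}. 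For (iii), the scalar quadrature identity $\sum_k |\InnerL{\PT{f},\fra}|^{2} = \sum_k \wN\,|\InnerL{\PT{f},\cfra[j,\pN]}|^{2}$ approximates $\int_{\sph{2}}|\InnerL{\PT{f},\cfra}|^{2}\IntDiff{y}$, whose integrand is again a spherical polynomial of degree at most $2^{j+1}$ in $\PT{y}$, so exactness forces equality with the continuous counterpart.

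The main obstacle is the polynomial-degree bookkeeping that unlocks the reduction above: one must verify that the compact support of $\FT{\scala}$ and $\FT{\scalb^n}$ combined with the degree-$2\ell$ structure of the rank-two tensor kernels $\dsh\otimes\dsh$ and $\csh\otimes\csh$ keeps the quadrature-variable integrand inside the polynomial exactness class of $\QN$. Once this polynomial control is secured, the semi-discrete theorem inherits every step of its continuous analogue, with sums over quadrature nodes replacing integrals.
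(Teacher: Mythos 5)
Your proposal is correct and follows essentially the same route as the paper: both reduce the semi-discrete statements to the continuous ones of Theorem~\ref{thm:equiv.tight.cfrsys} by using polynomial exactness of $\QN$ to identify $\sum_{k}\fra\InnerL{\PT{f},\fra}^{T}$ with $\int_{\sph{2}}\cfra\InnerL{\PT{f},\cfra}^{T}\IntDiff{y}$ (the paper phrases this via discrete orthogonality of the vector spherical harmonics rather than your direct degree count on the integrand, but these are the same mechanism, and both rely on the same implicit band-limitation of $\FT{\scala},\FT{\scalb^{n}}$). The only cosmetic difference is that the paper obtains (ii)$\Leftrightarrow$(iii) by polarization while you re-run the quadrature argument on $|\InnerL{\PT{f},\cfra}|^{2}$; both work.
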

\begin{proof}
	Since the components of $\dsh$ and $\csh$ are scalar polynomials on $\sph{2}$ with degrees not exceeding $\ell$, $\conj{\dsh(\PT{x})}^{T}\dsh[\ell' m'](\PT{x})$, $\conj{\dsh(\PT{x})}^{T}\csh[\ell' m'](\PT{x})$, $\conj{\csh(\PT{x})}\csh[\ell' m'](\PT{x})$ and their complex conjugates are scalar polynomials of $\PT{x}$ with degrees not exceeding $\ell+\ell'$. For $j\ge0$, as $\{(\wN,\pN)\}_{k=1}^{N_{j}}$ is a quadrature rule exact for degree $2^{j+1}$, by the orthogonality of $\dsh$ and $\csh$, for $\ell,\ell\ge1$, $m,m'=-\ell,\dots,\ell$, one gets
	\begin{align}
		&\sum_{k=1}^{N_{j}}\wN\:\conj{\dsh(\pN)}^{T}\dsh[\ell' m'](\pN)
		= \sum_{k=1}^{N_{j}}\wN\:\conj{\csh(\pN)}^{T}\csh[\ell' m'](\pN)=\delta_{\ell \ell'}\delta_{m m'},\label{eq:vsh.InnerD}\\
		&\sum_{k=1}^{N_{j}}\wN\:\conj{\dsh(\pN)}^{T}\csh[\ell' m'](\pN)
		=\sum_{k=1}^{N_{j}}\wN\:\conj{\csh(\pN)}^{T}\dsh[\ell' m'](\pN)=0.\label{eq:vsh.InnerD2}
	\end{align}
	By Definition~\ref{defn:fr}, for $n=1,\dots,r$ and $k=1,\dots,N_j$, $j=0,1,\dots$, it follows
	\begin{align}
		\InnerL{\PT{f},\fra}^{T} &= \sum_{\ell=1}^{\infty}\sqrt{\wN}\:\conj{\FS{\scala}\left(\frac{\eigvm}{2^{j}}\right)} \sum_{m=-\ell}^{\ell}\left(\dfco{\PT{f}}\dsh(\pN)+\cfco{\PT{f}}\csh(\pN)\right) \label{eq:fr.coeff1}\\
	    \InnerL{\PT{f},\frb{n}}^{T} &= \sum_{\ell=1}^{\infty}\sqrt{\wN[j+1,k]}\:\conj{\FS{\scalb^{n}}\left(\frac{\eigvm}{2^{j}}\right)} \sum_{m=-\ell}^{\ell}\left(\dfco{\PT{f}}\dsh(\pN)+\cfco{\PT{f}}\csh(\pN)\right)\label{eq:fr.coeff2}.
	\end{align}
	Then,
	\begin{align*}
		&\sum_{k=1}^{N_{j}}\fra\InnerL{\PT{f},\fra}^{T}\\
		&\quad= \sum_{k=1}^{N_j} \sum_{\ell=1}^{\infty}\wN\:\FS{\scala}\left(\frac{\eigvm}{2^{j}}\right)
		 \sum_{m=-\ell}^{\ell} \left(\dsh\otimes \dsh(\pN) + \csh\otimes \csh(\pN)\right)\\
		&\hspace{2.2cm}\times \sum_{\ell=1}^{\infty}\conj{\FT{\scala}\left(\frac{\eigvm}{2^{j}}\right)}\sum_{m=-\ell}^{\ell}\left(\dfco{\PT{f}}\dsh(\pN)+\cfco{\PT{f}}\csh(\pN)\right)\\
		&\quad = \sum_{k=1}^{N_j}\sum_{\ell=1}^{\infty}\sum_{\ell'=1}^{\infty}\wN\:\FS{\scala}\left(\frac{\eigvm}{2^{j}}\right)
		\conj{\FS{\scala}\left(\frac{\eigvm[\ell']}{2^{j}}\right)}
		\sum_{m=-\ell}^{\ell}\sum_{m'=-\ell'}^{\ell'}
		\left(\dsh\conj{\dsh(\pN)}^{T}+\csh\conj{\csh(\pN)}^{T}\right)\\
		&\hspace{2.2cm}\times \left(\dfco[\ell' m']{\PT{f}}\dsh[\ell' m'](\pN)+\cfco[\ell' m']{\PT{f}}\csh[\ell' m'](\pN)\right)\\
		&\quad= \sum_{\ell=1}^{\infty}\sum_{\ell'=1}^{\infty}\FS{\scala}\left(\frac{\eigvm}{2^{j}}\right)
		\conj{\FS{\scala}\left(\frac{\eigvm[\ell']}{2^{j}}\right)}
		\sum_{m=-\ell}^{\ell}\sum_{m'=-\ell'}^{\ell'}
		\left(\dsh\sum_{k=1}^{N_j}\wN\:\conj{\dsh(\pN)}^{T}\dsh[\ell' m'](\pN)\dfco[\ell' m']{\PT{f}}\right.\\
		&\hspace{1.2cm} +\dsh\sum_{k=1}^{N_j}\wN\:\conj{\dsh(\pN)}^{T}\csh[\ell' m'](\pN)\cfco[\ell' m']{\PT{f}}
		+ \csh\sum_{k=1}^{N_j}\wN\:\conj{\csh(\pN)}^{T}\dsh[\ell' m'](\pN) \dfco[\ell' m']{\PT{f}}\\
		&\hspace{1.2cm}\left. + \csh \sum_{k=1}^{N_j}\wN\:\conj{\csh(\pN)}^{T}\csh[\ell' m'](\pN) \cfco[\ell' m']{\PT{f}}\right).
	\end{align*}
	Then, by \eqref{eq:vsh.InnerD}, \eqref{eq:vsh.InnerD2} and \eqref{eq:cfraprj.f}, for $j=0,1,\dots$,
	\begin{align*}
		&\sum_{k=1}^{N_{j}}\fra\InnerL{\PT{f},\fra}^{T}=\sum_{\ell=1}^{\infty}\left|\FS{\scala}\left(\frac{\eigvm}{2^{j}}\right)\right|^{2}\sum_{m=-\ell}^{\ell}\left(\dfco{\PT{f}}\dsh+\cfco{\PT{f}}\csh\right)=\int_{\sph{2}}\cfra[j,\PT{y}]\InnerL{\PT{f},\cfra[j,\PT{y}]}^{T}\IntDiff{y}.
	\end{align*}
	In a similar way, we obtain that for $n=1,\dots,r$ and $j=0,1,\dots$,
	\begin{align*}
		&\sum_{k=1}^{N_{j}}\frb{n}\InnerL{\PT{f},\frb{n}}^{T}=\sum_{\ell=1}^{\infty}\left|\FS{\scalb^{n}}\left(\frac{\eigvm}{2^{j}}\right)\right|^{2}\sum_{m=-\ell}^{\ell}\left(\dfco{\PT{f}}\dsh+\cfco{\PT{f}}\csh\right)=\int_{\sph{2}}\cfrb[j,\PT{y}]{n}\InnerL{\PT{f},\cfrb[j,\PT{y}]{n}}^{T}\IntDiff{y}.
	\end{align*}
	Thus, (ii) is equivalent to (ii) in Theorem~\ref{thm:equiv.tight.cfrsys}. Then (i), (ii), (iv) and (v) are equivalent. The equivalence between (ii) and (iii) follows from the polarization identity.
\end{proof}

\section{Fast Tensor Needlet Transforms}\label{sec4}
In this section, we discuss the multi-level filter bank transforms
associated with a sequence of tight tensor needlets  $\frsys(\filtbk;\{\QN\}_{j\geq J})$ for $\vLp{2}{2}$. The transforms include the \emph{decomposition} and the \emph{reconstruction}. The decomposition of $\fracoev=(\fracoev[j,k])_{k=1}^{N_j} = (\InnerL{\PT{f},\fra}^{T})_{k=1}^{N_j}$ is split into a coarse scale \emph{approximation coefficient sequence} $\fracoev[j-1]=(\InnerL{\PT{f},\fra[j-1,k]}^{T})_{k=1}^{N_{j-1}}$ and into the coarse scale \emph{detail coefficient sequences} $\frbcoev[j-1]{n}= (\frbcoev[j-1,k]{n} )_{k=1}^{N_j} = (\InnerL{\PT{f},\frb[j-1,k]{n}}^{T})_{k=1}^{N_j}$, $n=1,\ldots,r$. The reconstruction of  $\fracoev$ is an inverse process from the coarse scale approximations and details to fine scales.
We show that the decomposition and reconstruction algorithms for the filter bank transforms can be implemented based on discrete Fourier transforms on $\sph{2}$. In particular, with the employment of fast discrete Fourier transforms (FFTs) for vector spherical harmonics on $\sph{2}$, we are able to develop fast algorithmic realizations for the multi-level filter bank transforms, that is, fast algorithm for tensor needlet transforms.

\subsection{Multi-level filter bank transforms}
\label{sec:decomp.reconstr}

The multi-level framelet filter bank transform algorithms that use convolution, downsampling and upsampling for data sequences on $\sph{2}$, are introduced in this section.

Let $\{\QN\}_{j={\ord[0]}}^{\infty}$ be a sequence of quadrature rules on $\sph{2}$ with $\QN=\{(\wN,\pN) \in\R\times\sph{2} \setsep k=0,\dots,N_{j}\}$ a polynomial-exact quadrature rule of degree $2^j$.
For an integer $N\in\N_0$, let $l(N)$ be the set of complex-valued sequences on $[0,N]$. Let $\Lambda_j:=\dim \Pi_{2^{j-1}} = \#\{\ell\in\N_0: \eigvm\le 2^{j-1}\}$. The following transforms (operators or operations) between sequences in $l(\Lambda_j)$ and sequences in $l(N_j)$ are used to describe and implement the algorithms.
Let $\mask\in l_1(\Z)$ be a mask (filter).
The \emph{discrete convolution}  $\fracoev[] \dconv \mask$ of a sequence $\fracoev[]\in l(\Lambda_j,N_j)$  with a mask $\mask$ is a sequence in  $l(\Lambda_j,N_j)$ defined as
\begin{equation}\label{eq:dis.conv}
    (\fracoev[] \dconv \mask)_k := \sum_{\ell=0}^{\Lambda_j} \: {\FS{\mask}}\left(\frac{\eigvm}{2^{j}}\right)\sqrt{\wN}\:\sum_{m=-\ell}^{\ell}\left(\dfcav[\ell,m]\dsh(\pN)+\tilde{\mathrm{v}}_{\ell,m}\csh(\pN)\right),\quad k=0,\dots,N_{j}.
\end{equation}
The \emph{downsampling operator} $\downsmp:l(\Lambda_j,N_j)\rightarrow l(N_{j-1})$ for a  $(\Lambda_j,N_j)$-sequence $\fracoev[]$ is
\begin{equation}\label{eq:downsampling}
    (\fracoev[]\downsmp)_k := \sum_{\ell=0}^{\Lambda_{j}} \:\sqrt{\wN[j-1,k]}\:\sum_{m=-\ell}^{\ell}\left(\dfcav[\ell,m]\dsh(\pN[j-1,k])+ \tilde{\mathrm{v}}_{\ell,m}\csh(\pN[j-1,k])\right),\quad k=1,\dots,N_{j-1}.
\end{equation}
The \emph{upsampling operator} $\:\upsmp: l(\Lambda_{j-1},N_{j-1})\rightarrow l(\Lambda_j,N_j)$ for a $(\Lambda_{j-1},N_{j-1})$-sequence $\fracoev[]$ is
\begin{equation}\label{eq:upsampling}
(\fracoev[]\upsmp)_k :=
\sum_{\ell=0}^{\Lambda_{j-1}} \sqrt{\wN}\:\sum_{m=-\ell}^{\ell}\left(\dfcav[\ell,m]\dsh(\pN)+\tilde{\mathrm{v}}_{\ell,m}\csh(\pN)\right),\quad k = 1,\ldots, N_{j}.
\end{equation}

For a mask $\mask$, let $\mask^\star$ be the mask satisfying $\FT{\mask^\star}(\xi) = \overline{\FT\mask(\xi)}$, $\xi\in\R$.
The above convolution, downsampling and upsampling give the implementation for decomposition and reconstruction, as follows.
\begin{theorem}
\label{thm:dec:rec}
Let $\frsys(\filtbk;\{\QN\}_{j\geq J})$, $J\geq J_0$, given in \eqref{eq:frsys} be a sequence of tensor needlet systems which elements are given by Definition~\ref{defn:cfr} with the filter bank $\filtbk$ given by \eqref{eq:filtbk}, scaling functions in \eqref{eq:scal.sys} and with a sequence of quadrature rules $\QN=\{(\wN,\pN)\}_{k=1}^{N_j}$ which is exact for degree $2^{j+1}$, $j=0,1,\dots$.
Let $\fracoev=(\fracoev[j,k])_{k=1}^{N_j}$ and $\frbcoev{n}=(\frbcoev[j,k]{n})_{k=1}^{N_{j+1}}$, $n=1,\ldots,r$, be the approximation coefficient sequence and detail coefficient sequences of $f\in\vLp{2}{2}$ at scale $j$ given by
\begin{equation}\label{eq:coe.framelets}
    \fracoev[j,k] := \InnerL{\PT{f},\fra}^{T},\; k = 1,\ldots, N_{j}, \quad\mbox{and}\quad \frbcoev[j,k]{n} := \InnerL{\PT{f},\frb{n}}^{T}, \;  k = 1,\ldots, N_{j+1},\;  n=1,\dots,r,
\end{equation}
respectively. Then,
\begin{enumerate}[{\rm (i)}]
\item the coefficient sequence $\fracoev[j]$ is a  $(\Lambda_j,N_j)$-seqeunce and $\frbcoev[j]{n}, n=1,\ldots, r$, are  $(\Lambda_{j+1},N_{j+1})$-seqeunces for all $j\ge {\ord[0]}$;
\item for any $j\ge {\ord[0]}+1$, the following decomposition relations hold:
\begin{equation}
\label{eq:decomp.j1.j}
  \fracoev[j-1] = (\fracoev\dconv \maska^\star)\downsmp,\quad \frbcoev[j-1]{n} = \fracoev\dconv (\maskb)^\star,\quad  n=1,\ldots,r;
\end{equation}
\item for any $j\ge {\ord[0]}+1$, the following reconstruction relation holds:
\begin{equation}\label{eq:reconstr.j.j1}
  \fracoev =  (\fracoev[j-1]\upsmp) \dconv \maska+\sum_{n=1}^r   \frbcoev[j-1]{n} \dconv \maskb.
\end{equation}
\end{enumerate}
\end{theorem}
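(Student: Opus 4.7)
The plan is to express both the decomposition and reconstruction identities in terms of the Fourier coefficients $\dfco{\PT{f}}$, $\cfco{\PT{f}}$ of $\PT{f}$, and then match the two sides by means of the refinement relation \eqref{eq:scaling.mask} and, for the reconstruction step, the tightness identity \eqref{eq:filters_union}. The bridge between the point-value form $\{\fracoev[j,k]\}_{k=1}^{N_j}$ of a coefficient sequence and its VSH-coefficient form is provided by the explicit formula \eqref{eq:fr.coeff1}, which I would use to identify $\fracoev[j,k]$ as the $\sqrt{\wN}$-weighted evaluation at $\pN$ of the VSH expansion whose divergence-free and curl-free coefficients are $\overline{\FS{\scala}(\eigvm/2^j)}\dfco{\PT{f}}$ and $\overline{\FS{\scala}(\eigvm/2^j)}\cfco{\PT{f}}$, respectively; an analogous identification holds for $\frbcoev[j]{n}$, with $\FS{\scalb^n}$ replacing $\FS{\scala}$ and with $\pN[j+1,k]$, $\wN[j+1,k]$ in place of the level-$j$ nodes. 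Part (i) then follows at once: under the standing bandlimit assumption that $\FS{\scala}(\eigvm/2^j)=0$ whenever $\eigvm>2^{j-1}$, the expansion terminates at $\Lambda_j$, and uniqueness of the VSH coefficients --- which lets one pass freely between the two representations --- rests on exactness of $\QN$ on $\Pi_{2^{j+1}}$ combined with the discrete orthogonality relations \eqref{eq:vsh.InnerD}--\eqref{eq:vsh.InnerD2}.

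For (ii), I would substitute the VSH coefficients of $\fracoev[j]$ into the convolution formula \eqref{eq:dis.conv} and use $\FS{\maska^\star}=\overline{\FS{\maska}}$; the sequence $\fracoev[j]\dconv\maska^\star$ would then have VSH coefficients $\overline{\FS{\maska}(\eigvm/2^j)\FS{\scala}(\eigvm/2^j)}\dfco{\PT{f}}$ (and the curl-free analogue), which by \eqref{eq:scaling.mask} collapse to $\overline{\FS{\scala}(\eigvm/2^{j-1})}\dfco{\PT{f}}$. Applying $\downsmp$ via \eqref{eq:downsampling} re-evaluates this expansion at the coarser nodes $\pN[j-1,k]$ with weight $\sqrt{\wN[j-1,k]}$, yielding exactly \eqref{eq:fr.coeff1} at level $j-1$, i.e.\ $\fracoev[j-1,k]$. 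The detail relation $\frbcoev[j-1]{n}=\fracoev[j]\dconv(\maskb)^\star$ follows identically, using $\FS{\scalb^n}(\eigvm/2^{j-1})=\FS{\scala}(\eigvm/2^j)\FS{\maskb^n}(\eigvm/2^j)$ in place of the $\maska$-refinement; no $\downsmp$ step is needed because $\frbcoev[j-1]{n}$ is already indexed by the level-$j$ quadrature nodes.

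For (iii), upsampling via \eqref{eq:upsampling} preserves the VSH coefficients of $\fracoev[j-1]$ (extended by zero beyond $\Lambda_{j-1}$) while transplanting the evaluation to the finer grid $\pN$ with weight $\sqrt{\wN}$, so $\fracoev[j-1]\upsmp\dconv\maska$ would have VSH coefficients $\FS{\maska}(\eigvm/2^j)\overline{\FS{\scala}(\eigvm/2^{j-1})}\dfco{\PT{f}}=|\FS{\maska}(\eigvm/2^j)|^2\overline{\FS{\scala}(\eigvm/2^j)}\dfco{\PT{f}}$ after invoking \eqref{eq:scaling.mask}, and analogously for the curl-free part. A parallel computation would show that $\frbcoev[j-1]{n}\dconv\maskb$ has VSH coefficients $|\FS{\maskb^n}(\eigvm/2^j)|^2\overline{\FS{\scala}(\eigvm/2^j)}\dfco{\PT{f}}$. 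Summing over $n=1,\dots,r$ and invoking \eqref{eq:filters_union}, the total filter factor $|\FS{\maska}|^2+\sum_n|\FS{\maskb^n}|^2$ collapses to $1$ on the effective support $\FS{\scala}(\eigvm/2^j)\ne 0$, leaving precisely $\overline{\FS{\scala}(\eigvm/2^j)}\dfco{\PT{f}}$, i.e.\ $\fracoev[j,k]$ via \eqref{eq:fr.coeff1}.

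The main obstacle is not conceptual but rather consistent bookkeeping: at each step one must pass cleanly between the point-value and VSH-coefficient representations of a $(\Lambda,N)$-sequence, and verify that $\dconv$, $\downsmp$ and $\upsmp$ act, respectively, as multiplication by $\FS{\mask}(\eigvm/2^j)$, re-evaluation at a coarser grid and (effectively) zero-extension of the VSH coefficients. This dictionary is valid only because $\QN$ is exact on $\Pi_{2^{j+1}}$ --- a hypothesis invoked at every transition and which underpins the discrete orthogonality \eqref{eq:vsh.InnerD}--\eqref{eq:vsh.InnerD2}. Once that framework is in place, both \eqref{eq:decomp.j1.j} and \eqref{eq:reconstr.j.j1} reduce to algebraic identities in VSH coefficients which follow at once from \eqref{eq:scaling.mask} and \eqref{eq:filters_union}.
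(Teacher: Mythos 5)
Your proposal is correct and follows essentially the same route as the paper's proof: identify each coefficient sequence with its vector-spherical-harmonic coefficients $\conj{\FT{\scala}(\eigvm/2^j)}\dfco{\PT{f}}$ (and the curl-free analogue) via \eqref{eq:fr.coeff1}--\eqref{eq:fr.coeff2}, use the refinement relation \eqref{eq:scaling.mask} to obtain the decomposition identities, and invoke \eqref{eq:filters_union} to collapse the filter factor $|\FT{\maska}|^2+\sum_n|\FT{\maskb}|^2$ to $1$ for the reconstruction. The bookkeeping dictionary you describe (convolution as multiplication of VSH coefficients, down/upsampling as re-evaluation on a different grid, all justified by quadrature exactness) is exactly the mechanism the paper uses.
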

\begin{proof}
For $\fracoev$ and $\frbcoev[j-1]{n}$ in \eqref{eq:coe.framelets}, by \eqref{eq:fr.coeff1}, \eqref{eq:fr.coeff2}, \eqref{eq:scaling.mask} and $\supp\FT{\scala}\subseteq[0,1/c]$, we obtain that $\supp\FT{\scalb^n}\subseteq[0,2/c]$,
\[
\fracoev[j,k] =\sum_{\ell=0}^{\Lambda_j}\: \conj{\FT{\scala}\left(\frac{\eigvm}{2^{j}}\right)}\sqrt{\wN}\: \sum_{m=-\ell}^{\ell}\left(\dfco{\PT{f}}\dsh(\pN)+\cfco{\PT{f}}\csh(\pN)\right)
\]
and
\[
\frbcoev[j-1,k]{n} =\sum_{\ell=0}^{\Lambda_j}\: \conj{\FT{\scalb^n}\left(\frac{\eigvm}{2^{j-1}}\right)}\sqrt{\wN}\: \sum_{m=-\ell}^{\ell}\left(\dfco{\PT{f}}\dsh(\pN)+\cfco{\PT{f}}\csh(\pN)\right).
\]
Hence, $\fracoev$ and $\frbcoev[j-1]{n}$, $n=1,\dots,r$, are $(\Lambda_j,N_j)$-sequences with the discrete Fourier coefficients  $\dfcav[j]:=(\dfcav[j,\ell])_{\ell=1}^{\Lambda_{j}}$ and $\dfcbv[j-1]{n}:=(\dfcbv[j-1,\ell]{n})_{\ell=1}^{\Lambda_{j}}$  given by
\begin{equation*}
    \dfcav[j,\ell,m] = \:\dfco{\PT{f}}\: \conj{\FT{\scala}\left(\frac{\eigvm}{2^{j}}\right)},\quad
     \dfcbv[j-1,\ell,m]{n}  = \:\dfco{\PT{f}}\: \conj{\FT{\scalb^n}\left(\frac{\eigvm}{2^{j}}\right)},\quad \ell=1,\ldots,\Lambda_{j},
\end{equation*}
\begin{equation*}
   \tilde{\mathrm{v}}_{j,\ell,m} = \:\cfco{\PT{f}}\: \conj{\FT{\scala}\left(\frac{\eigvm}{2^{j}}\right)},\quad
     \tilde{ \mathrm{w}}_{j-1,\ell,m}^{n}  = \:\cfco{\PT{f}}\: \conj{\FT{\scalb^n}\left(\frac{\eigvm}{2^{j}}\right)},\quad \ell=1,\ldots,\Lambda_{j}.
\end{equation*}
Thus, statement (i) holds.

Note that $\fracoev[j-1]$ is  a $(\Lambda_{j-1},N_{j-1})$-sequence.
For $k=1,\ldots, N_{j-1}$, by \eqref{eq:scaling.mask}, we can obtain
\begin{align*}
    \fracoev[j-1,k]&= \sum_{\ell=1}^{\Lambda_{j-1}}\: \conj{\FT{\scala}\left(\frac{\eigvm}{2^{j-1}}\right)}\sqrt{\wN[j-1,k]}\:\sum_{m=-\ell}^{\ell}\left(\dfco{\PT{f}}\dsh(\pN[j-1.k])+\cfco{\PT{f}}\csh(\pN[j-1.k])\right)\notag\\
    &= \sum_{\ell=1}^{\Lambda_{j-1}}\:\conj{\FT{\scala}\left(\frac{\eigvm}{2^{j}}\right)}\: \conj{\FT{\maska}\left(\frac{\eigvm}{2^{j}}\right)}\sqrt{\wN[j-1,k]}\: \sum_{m=-\ell}^{\ell}\left(\dfco{\PT{f}}\dsh(\pN[j-1.k])+\cfco{\PT{f}}\csh(\pN[j-1.k])\right)\\
    &= \sum_{\ell=1}^{\Lambda_{j}} \:\conj{\FT{\maska}\left(\frac{\eigvm}{2^{j}}\right)}\sqrt{\wN[j-1,k]}\: \sum_{m=-\ell}^{\ell}\left( \dfcav[j,\ell,m]\dsh(\pN[j-1.k])+ \tilde{\mathrm{v}}_{j,\ell,m}\csh(\pN[j-1.k])\right)\\
    &= \bigl((\fracoev \dconv \maska^\star)\downsmp\bigr)_{k}.
\end{align*}
Similarly, for $k=0,\ldots, N_{j-1}$ and $n=1,\dots,r$,
\begin{align*}
\frbcoev[j-1,k]{n}
&=\sum_{\ell=1}^{\Lambda_{j}} \:\conj{\FT{\scalb^n}\left(\frac{\eigvm}{2^{j-1}}\right)}\sqrt{\wN[j-1,k]}\: \sum_{m=-\ell}^{\ell}\left(  \dfcbv[j-1,\ell,m]{n}\dsh(\pN[j-1.k])+ \tilde{ \mathrm{w}}_{j-1,\ell,m}^{n} \csh(\pN[j-1.k])\right)\\
&=(\fracoev \dconv (\maskb)^\star)_k.
\end{align*}
This proves \eqref{eq:decomp.j1.j}, thus, statement (ii) holds.

Using $\fracoev[j-1]=(\fracoev\dconv\maska^{\star})\downsmp$ and $\frbcoev[j-1]{n}=\fracoev \dconv (\maskb)^\star$, we obtain
\[
\breve{\mathrm{v}}: = (\fracoev[j-1]\upsmp)\dconv\maska + \sum_{n=1}^{r} \frbcoev[j-1]{n}\dconv\maskb
=(((\fracoev\dconv\maska^{\star})\downsmp)\upsmp)\dconv\maska
+ \sum_{n=1}^{r} (\fracoev \dconv (\maskb)^\star)\dconv \maskb.
\]
This with \eqref{eq:dis.conv}, \eqref{eq:downsampling}, \eqref{eq:upsampling} and \eqref{eq:filters_union} gives
\[
\begin{aligned}
\breve{\mathrm{v}}_k &=\sum_{\ell=0}^{\Lambda_j}\:\left( \left|\FT\maska\left(\frac{\eigvm}{2^j}\right)\right|^2+\sum_{n=1}^r
 \left|\FT\maskb\left(\frac{\eigvm}{2^j}\right)\right|^2\right) \sqrt{\wN}\sum_{m=-\ell}^{\ell}\left( \dfcav[j,\ell,m]\dsh(\pN)+\tilde{\mathrm{v}}_{j,\ell,m}\csh(\pN)\right)
\\&=\sum_{\ell=0}^{\Lambda_j}\: \sqrt{\wN}\: \sum_{m=-\ell}^{\ell}\left( \dfcav[j,\ell,m]\dsh(\pN)+\tilde{\mathrm{v}}_{j,\ell,m}\csh(\pN)\right)= \fracoev[j,k],\quad
\end{aligned}
\]

thus proving \eqref{eq:reconstr.j.j1}, which completes the proof.
\end{proof}
Equations~\eqref{eq:decomp.j1.j} and \eqref{eq:reconstr.j.j1} show the needlet transform for consecutive levels. Recursively using \eqref{eq:decomp.j1.j} and \eqref{eq:reconstr.j.j1} one obtains the \emph{multi-level needlet transforms}.

\subsection{Fast algorithms for tensor needlet transforms}
In this section, we show the connection of needlet transforms and discrete Fourier transforms for vector spherical harmonics. It provides fast implementation of the needlet transforms with the computational cost nearly proportional to the size of input data or the number of the needlet coefficients at the finest level.

For $j\in\N_0$, we define the \emph{discrete Fourier transform} $\fft[j]: l(\Lambda_j)\rightarrow l(N_{j})$ for two sequences $\mathsf{b}:=\{\mathsf{b}_{\ell,m}: \ell\in \N,\; m=-\ell,\dots,\ell\}$ in $l(\Lambda_j)$ and $\mathsf{c}:=\{\mathsf{c}_{\ell,m}: \ell\in \N,\; m=-\ell,\dots,\ell\}$ in $l(\Lambda_j)$ as
\begin{equation*}\label{def:fft}
\bigl(\fft[j] (\mathsf{b},\mathsf{c})\bigr)_k :=  \sqrt{\wN[j,k]}\sum_{\ell=0}^{\Lambda_j}\sum_{m=-\ell}^{\ell}\left(\mathsf{b}_{\ell,m}\:\dsh(\pN[j,k])+\mathsf{c}_{\ell,m}\:\csh(\pN[j,k])\right),\quad k = 0,\ldots, N_{j}
\end{equation*}
The sequence $\fft[j] (\mathsf{b},\mathsf{c})$ is called a pair of \emph{$(\Lambda_j,N_{j})$-sequences}, and the pair $(\mathsf{b},\mathsf{c})$ is the sequences of \emph{discrete Fourier coefficients} of $\fft[j] (\mathsf{b},\mathsf{c})$. Let $l(\Lambda_j,N_{j})$ be the set of all $(\Lambda_j,N_{j})$-pairs.
The \emph{adjoint discrete Fourier transform} $\fft[j]^*: l(N_{j})\rightarrow l(\Lambda_j)$ for a sequence $\fracoev[]=(\fracoev[k])_{k=0}^{N_j}\in l(N_j)$ is
\begin{equation*}\label{def:adjfft}
(\fft[j]^*\fracoev[])_{\ell,m} :=    \left(\sum_{k=0}^{N_{j}}  \fracoev[k] \sqrt{\wN[j,k]}\:\conj{\dsh(\pN[j,k])},\sum_{k=0}^{N_{j}}  \fracoev[k] \sqrt{\wN[j,k]}\:\conj{\csh(\pN[j,k])}\right), \quad \ell=0,\dots,\Lambda_j,\; m =-\ell,\dots,\ell.
\end{equation*}
Since $\QN$ is a polynomial-exact quadrature rule of degree $2^j$, for every $(\Lambda_j,N_j)$-sequence $\fracoev[]$, there is a unique pair of sequences $\mathsf{b},\mathsf{c}\in l(\Lambda_j)$ such that $\fft(\mathsf{b},\mathsf{c}) = \fracoev[]$.
We can write $(\dfcav[],\tilde{\vv}):=(\mathsf{b},\mathsf{c})=\fft^* \fracoev[]$ for the discrete Fourier coefficient sequence of a $(\Lambda_j,N_j)$-sequence $\fracoev[]$.
Since $\dfco{(\fracoev[] \dconv \mask)} = \dfco{\vv}\:{\FS{\mask}}\left(\frac{\eigvm}{2^{j}}\right)$ and $\cfco{(\vv\dconv \mask)} = \cfco{\vv}\:{\FS{\mask}}\left(\frac{\eigvm}{2^{j}}\right)$ for $\ell\in\Lambda_j$ and $m=-\ell,\dots,\ell$, we obtain that $\widehat{\vv \dconv \mask},\widetilde{\vv\dconv \mask}\in l(\Lambda_j)$ and the discrete convolution in (\ref{eq:dis.conv}) is equivalent to  $\fft(\widehat{\vv \dconv \mask},\widetilde{\vv \dconv \mask})$.

Let $\mathbf{S}(\fft^{*}\vv)$ be the sequence in $l(\Lambda_j)$ given by $\mathbf{S}(\fft^{*}\vv) := \hat{\vv} +\tilde{\vv}$ whose components are
\begin{equation*}
	\sum_{k=0}^{N_{j}}\fracoev[k] \sqrt{\wN[j,k]}\left(\conj{\dsh(\pN[j,k])}+\conj{\csh(\pN[j,k])}\right).
\end{equation*}
We can rewrite \eqref{eq:decomp.j1.j} and \eqref{eq:reconstr.j.j1} using discrete Fourier transforms for vector spherical harmonics, as follows:
\begin{equation}\label{eq:fnt.dft}
\begin{aligned}
  &\fracoev[j-1] = (\fracoev\dconv \maska^\star)\downsmp = \fft[j-1](\widehat{\fracoev\dconv \maska^\star},\widetilde{\fracoev\dconv \maska^\star}),\\
  &\frbcoev[j-1]{n} = \fracoev\dconv (\maskb)^\star = \fft[j-1](\widehat{\fracoev\dconv (\maskb)^\star},\widetilde{\fracoev\dconv (\maskb)^\star}),\quad  n=1,\ldots,r;\\[1mm]
  &
  \fracoev = (\fracoev[j-1]\upsmp) \dconv \maska+\sum_{n=1}^r   \frbcoev[j-1]{n} \dconv \maskb= \left(\mathbf{S}(\fft[j]^*(\fracoev[j-1]))\right) \dconv \maska + \sum_{n=1}^r \left(\mathbf{S}(\fft[j]^*(\frbcoev[j-1]{n}))\right) \dconv \maskb.
\end{aligned}
\end{equation}

The following pseudo-code shows detailed implementation for the decomposition and reconstruction of multi-level FaTeNT.\vspace{3mm}

\IncMargin{1em}
\begin{algorithm}[H]
\SetKwData{step}{Step}
\SetKwInOut{Input}{Input}\SetKwInOut{Output}{Output}
\BlankLine
\Input{$\fracoev[\ord]$ -- a $(\Lambda_J,N_J)$-sequence}

\Output{$\bigl(\{\frbcoev[\ord-1]{n},\frbcoev[\ord-2]{n},\dots,\frbcoev[J_0]{n}\}_{n=1}^{r},\fracoev[J_0]\bigr)$}

$\fracoev[\ord] \longrightarrow \dfcav[\ord]$ \tcp*[f]{adjoint FFT}\\

\For{$j\leftarrow \ord$ \KwTo $J_0+1$}{
    $\dfcav[j-1] \longleftarrow \dfcav[j,\cdot] \: \conj{\FT{\maska}}\left(2^{-j}\eigvm[\cdot]\right)$ \tcp*[f]{downsampling \& convolution}\\[1mm]

\For{$n\leftarrow 1$ \KwTo $r$}{

    $\dfcbv[j-1]{n} \longleftarrow  \dfcav[j,\cdot] \: \conj{\FT{\maskb}}\left(2^{-j}\eigvm[\cdot]\right)$ \tcp*[f]{convolution}\\[1mm]

    $\frbcoev[j-1]{n} \longleftarrow \dfcbv[j-1]{n}$ \tcp*[f]{ FFT}\\
    }
}

$\fracoev[J_0] \longleftarrow \dfcav[J_0]$ \tcp*[f]{ FFT}

\caption{Decomposition of Multi-Level FaTeNT}
\label{algo:decomp.multi.level}
\end{algorithm}
\vspace{3mm}

\begin{algorithm}[H]
\SetKwData{step}{Step}
\SetKwInOut{Input}{Input}\SetKwInOut{Output}{Output}
\BlankLine
\Input{$\bigl(\{\frbcoev[\ord-1]{n},\frbcoev[\ord-2]{n},\dots,\frbcoev[J_0]{n}\}_{n=1}^{r},\fracoev[J_0]\bigr)$}

\Output{$\fracoev[\ord]$ -- a $(\Lambda_{J},N_J)$-sequence}

$\dfcav[J_0] \longleftarrow \fracoev[J_0]$ \tcp*[f]{adjoint   FFT }\\

\For{$j\leftarrow J_0+1$ \KwTo $\ord$}{

\For{$n\leftarrow 1$ \KwTo $r$}{

$\dfcbv[j-1]{n}\longleftarrow \frbcoev[j-1]{n}$ \tcp*[f]{adjoint  FFT}\\

}

$\dfcav \longleftarrow (\dfcav[j-1,\cdot])\:\FT{\maska}\left(2^{-j}\eigvm[\cdot]\right)  + \sum_{n=1}^{r}\dfcbv[j,\cdot]{n} \;\FT{\maskb}\left(2^{-j}\eigvm[\cdot]\right)$\tcp*[f]{upsampling \& convolution}\\

}

$\fracoev[\ord] \longleftarrow \dfcav[\ord]$ \tcp*[f]{FFT}\\
\caption{Reconstruction of Multi-Level FaTeNT}\label{algo:reconstr.multi.level}
\end{algorithm}

\subsection{Fast vector spherical harmonic transforms}
Using the spherical coordinates, the scalar spherical harmonics can be explicitly written as, for $\ell=0,1,\dots$,
\begin{equation*}
\begin{aligned}
  \shY(\PT{x}) &:= \shY(\theta,\varphi) := \sqrt{\frac{2\ell+1}{4\pi}\frac{(\ell-m)!}{(\ell+m)!}}\aLegen{m}(\cos\theta)\: e^{\imu m\varphi}, \quad m=0,1,\ldots, \ell,\\
  \shY(\PT{x}) &:= (-1)^{m}\shY[\ell,-m](\PT{x}), \quad m=-\ell,\dots,-1.
 \end{aligned}
\end{equation*}
In the following, we would suppress the variable $\PT{x}$ in $\shY:=\shY(\PT{x})$ if no confusion arises.

Given the covariant spherical basis vectors \cite{Edmonds2016},
\begin{equation}\label{eq:cov.sph.basis.vec}
\mathbf{e}_{+1} = - \frac{1}{\sqrt{2}} ([1,0,0] + i [0,1,0])^T, \quad
  \mathbf{e}_{0} = [0,0,1]^T, \quad
  \mathbf{e}_{-1} = \frac{1}{\sqrt{2}}([1,0,0] - i [0,1,0])^T,
\end{equation}
the \emph{divergence-free} and \emph{curl-free vector spherical harmonics} can be represented respectively as follows,
\begin{equation}\label{eq:vsh}
\begin{aligned}
 \dsh &= B_{+1,\ell,m} \mathbf{e}_{+1} +
                  B_{0,\ell,m} \mathbf{e}_{0} +
                  B_{-1,\ell,m} \mathbf{e}_{-1},\\
 \csh &=  D_{+1,\ell,m} \mathbf{e}_{+1} +
                   D_{0,\ell,m} \mathbf{e}_{0}  +
                   D_{-1,\ell,m} \mathbf{e}_{-1},
\end{aligned}
\end{equation}
where $\lambda_\ell:=\ell(\ell+1)$ is the eigenvalue of the Laplace-Beltrami operator $\LBo$ for $\shY$ and the associated coefficients are explicitly given by
\begin{equation}\label{eq:Blm}
\begin{aligned}
  B_{+1,\ell,m} &= c_{\ell} C^{\ell,m}_{\ell-1,m-1,1,1} \shY[\ell-1,m-1] + d_\ell C^{\ell,m}_{\ell+1,m-1,1,1} \shY[\ell+1,m-1]\\
  B_{0,\ell,m} &= c_\ell C^{\ell,m}_{\ell-1,m,1,0} \shY[\ell-1,m] + d_{\ell} C^{\ell,m}_{\ell+1,m,1,0} \shY[\ell+1,m]\\
  B_{-1,\ell,m} &= c_\ell C^{\ell,m}_{\ell-1,m+1,1,-1}\shY[\ell-1,m+1] + d_\ell C^{\ell,m}_{\ell+1,m+1,1,-1}\shY[\ell+1,m+1]\\
  D_{+1,\ell,m} &= i C^{\ell,m}_{\ell,m-1,1,1} \shY[\ell,m-1], \\
  D_{0,\ell,m} &= i C^{\ell,m}_{\ell,m,1,0}\shY[\ell,m],\\
  D_{-1,\ell,m}&= i C^{\ell,m}_{\ell,m+1,1,-1}\shY[\ell,m+1],
\end{aligned}
\end{equation}
where the Clebsch-Gordan (CG) coefficients
\begin{equation*}
 C^{\ell,m}_{j_1,m_1,j_2,m_2} :=
(-1)^{(m+j_1-j_2)}\sqrt{2\ell+1}\left(\begin{array}{lll}j_1 & j_2 & \ell \\ m_1 &  m_2 & -m
\end{array} \right),
\end{equation*}
and
\begin{equation}\label{eq:cl,dl,betal}
	c_{\ell}:=\sqrt{\frac{\ell+1}{2\ell+1}},\quad d_{\ell}:=\sqrt{\frac{\ell}{2\ell+1}}.
\end{equation}
The Clebsch-Gordan coefficients are most familiar in quantum mechanics as the coefficients of a unitary
transformation that connects the tensor product basis from irreducible representations of SO(3), to its irreducible components in a direct sum, labelled by total angular momentum invariants \cite{Edmonds2016,biedenharn1984angular}. However, the more abstract algebraic theory of algebraic invariants by Clebsch and Gordon, was developed earlier in the 19th Century \cite{gordon1885vorlesungen}.

By \eqref{eq:cov.sph.basis.vec} and \eqref{eq:vsh}, for $\ell\ge1, m=-\ell,\dots,\ell$, the divergence-free and curl-free vector spherical harmonics of degree $(\ell,m)$ are
\begin{align}\label{eq:vsh2}
	\dsh = \begin{pmatrix}
		-\frac{1}{\sqrt{2}} \left(B_{+1,\ell,m} - B_{-1,\ell,m}\right)\\
		-\frac{1}{\sqrt{2}}\imu \left(B_{+1,\ell,m} + B_{-1,\ell,m}\right)\\
		B_{0,\ell,m}
	\end{pmatrix}
	\qquad
	\csh = \begin{pmatrix}
		-\frac{1}{\sqrt{2}}\left(D_{+1,\ell,m}-D_{-1,\ell,m}\right)\\
		-\frac{1}{\sqrt{2}}\imu \left(D_{+1,\ell,m}+D_{-1,\ell,m}\right)\\
		D_{0,\ell,m}
	\end{pmatrix}.
\end{align}
The set of vector spherical harmonics $\{\dsh,\csh: \ell=1,2,\dots, m=-\ell,\dots,\ell\}$ which is used in quantum mechanics \citep{Edmonds2016} forms an orthonormal basis for $\vLp{2}{2}$.
Based on properties detailed in \citet{NIST:DLMF}, we can obtain the explicit expression of those (CG) coefficients, respectively, see details in \cite{FaVest2019}.
By \eqref{eq:vsh2}, the fast vector spherical harmonics transforms can be implemented by using FFTs for scalar spherical harmonics multiple times \cite{GaLeSl2011,KeKuPo2007,RoTy2006, Tygert2008,Tygert2010}. Due to the space limitation, we refer the interested readers to \cite{FaVest2019} for details.

\subsection{Computational complexity analysis}
In the relation \eqref{eq:fnt.dft} the discrete convolution has the computational cost $\bigo{}{N_j}$ for level~$j$. FFTs for vector spherical harmonics have computational complexity $\bigo{}{N\sqrt{\log N}}$ for input data of size $N$, $N\ge1$. For decomposition and reconstruction in \eqref{eq:fnt.dft}, the input data has length up to $N_J$. Thus, the computational cost of the needlet transforms, which would be dominated by those of FFTs, is $\bigo{}{N_J}$. Hence, we call the needlet transforms in \eqref{eq:fnt.dft} and their multi-level versions \emph{fast tensor needlet transforms} or FaTeNT for tangent fields on $\sph{2}$.

\section{Numerical Study}\label{sec5}
\label{sec:fmtS2expr}
In this section, we start with an explicit construction of tensor needlets on $\sph{2}$, then introduce two types of quadrature rule nodes and two kinds of tangent fields used in our experiments. Later, we describe the experimental setup for using FaTeNT in our simulations. After that, we present the simulation results and discuss the effectiveness and efficiency of our proposed method.
\subsection{Tensor needlets construction}
\label{sec:fmtS2}
Here we demonstrate the procedures of tensor needlets construction, as similar to the steps performed in \cite{WaZh2018}. Based on our theoretical results, the construction needs several components: an explicit formulation of filter banks and the associated generators, vector spherical harmonics and a set of quadrature rule nodes for the discretization of continuous integral.

First, for the filter bank, let us consider a simple one with two high-pass filters, that is, $\filtbk=\{\maska;\maskb[1],\maskb[2]\}$, with the Fourier series denoted by

\begin{subequations}\label{eqs:mask.numer.s3}
\begin{align}
  \FT{\maska}(\xi)&: =
  \left\{\begin{array}{ll}
    1, & |\xi|<\frac{1}{8},\\[1mm]
    \cos\bigl(\frac{\pi}{2}\hspace{0.3mm} \nu(8|\xi|-1)\bigr), & \frac{1}{8} \le |\xi| \le \frac{1}{4},\\[1mm]
    0, & \frac14<|\xi|\le\frac12,
    \end{array}\right.\nonumber \\[1mm]
  \FT{\maskb[1]}(\xi)&:  =\left\{\begin{array}{ll}
    0, & |\xi|<\frac{1}{8},\\[1mm]
    \sin\bigl(\frac{\pi}{2}\hspace{0.3mm} \nu(8|\xi|-1)\bigr), & \frac{1}{8} \le |\xi| \le \frac{1}{4},\\[1mm]
    \cos\bigl(\frac{\pi}{2}\hspace{0.3mm} \nu(4|\xi|-1)\bigr), & \frac14<|\xi|\le\frac12.
    \end{array}\right.\nonumber \\[1mm]
  \FT{\maskb[2]}(\xi)&:
  =\left\{\begin{array}{ll}
    0, & |\xi|<\frac{1}{4},\nonumber\\[1mm]
    \sin\bigl(\frac{\pi}{2}\hspace{0.3mm} \nu(4|\xi|-1)\bigr), & \frac{1}{4} \le |\xi| \le \frac{1}{2},
    \end{array}\right.
\end{align}
\end{subequations}
where
\begin{equation*}
  \nu(t) := \scalg[3](t)^{2} = t^{4}(35 - 84t + 70t^{2} - 20 t^{3}), \quad t\in\Rone,
\end{equation*}
as in \cite[Chapter~4]{Daubechies1992}. It holds that for all $\xi\in[0,1/2]$, $|\FT{\maska}(\xi)|^2+|\FT{\maskb[1]}(\xi)|^2+|\FT{\maskb[2]}(\xi)|^2 = 1$, which implies \eqref{eq:filters_union}. Therefore, the associated generators $\Psi=\{\scala; \scalb^1,\scalb^2\}$ that satisfy \eqref{eq:scaling.mask} and \eqref{eq:scal.j.j+1} can be explicitly defined by
\begin{subequations}\label{eqs:scal.numer.s3}
\begin{align}
  \FT{\scala}(\xi)&=
  \left\{\begin{array}{ll}
    1, & |\xi|<\frac{1}{4},\nonumber\\[1mm]
    \cos\bigl(\frac{\pi}{2}\hspace{0.3mm} \nu(4|\xi|-1)\bigr), & \frac{1}{4} \le |\xi| \le \frac{1}{2},\\[1mm]
    0, & \hbox{else},
    \end{array}\right. \nonumber\\[1mm]
  \FT{\scalb^1}(\xi)&= \left\{\begin{array}{ll}
    \sin\left(\frac{\pi}{2}\hspace{0.3mm} \nu(4|\xi|-1)\right), & \frac{1}{4}\le|\xi|<\frac{1}{2},\\[1mm]
    \cos^2\left(\frac{\pi}{2}\hspace{0.3mm} \nu(2|\xi|-1)\right), & \frac{1}{2} \le |\xi| \le 1,\\[1mm]
    0, & \hbox{else},
    \end{array}\right.\nonumber\\[1mm]
  \FT{\scalb^2}(\xi)&= \left\{\begin{array}{ll}
   0, &|\xi|<\frac{1}{2},\\[1mm]
    \cos\left(\frac{\pi}{2}\hspace{0.3mm} \nu(2|\xi|-1)\right) \sin\left(\frac{\pi}{2}\hspace{0.3mm} \nu(2|\xi|-1)\right), & \frac{1}{2} \le |\xi| \le 1,\nonumber\\[1mm]
    0, & \hbox{else}.
    \end{array}\right.
\end{align}
\end{subequations}
Then, $\FS{\maska},\FS{\maskb[1]},\FS{\maskb[2]},\FT{\scala},\FT{\scalb^{1}},\FT{\scalb^{2}}$ are all in $\CkR[4-\epsilon]$ for some small positive $\epsilon$ \cite[p.~119]{Daubechies1992}, $\supp\FT{\scala}\subseteq [0,1/2]$ and $\supp\FT{\scalb^{n}}\subseteq [1/4,1]$, $n=1,2$. Also, the refinable function $\FT{\scala}$ satisfies \eqref{eq:scala.lim.semidis}. We omit the visualization of the filters $\FS{\maska}$, $\FS{\maskb[1]}$, $\FS{\maskb[2]}$ and the functions $\FT{\scala}$, $\FT{\scalb^1}$, $\FT{\scalb^2}$. Readers can refer to \cite{WaZh2018} for the plotting.

The continuous tensor needlets $\cfra(\PT{x}), \cfrb{1}(\PT{x})$ and $ \cfrb{2}(\PT{x})$ on $\sph{2}$ are expressed by
\begin{align*}
	\cfra(\PT{x}) &:= \sum_{\ell=1}^{\infty} \FS{\scala}\left(\frac{\ell}{2^{j}}\right)\sum_{m=-\ell}^{\ell}\Bigl(\dsh(\PT{x})\otimes\dsh(\PT{y}) + \csh(\PT{x})\otimes\csh(\PT{y})\Bigr),\\
	\cfrb{n}(\PT{x}) &:= \sum_{\ell=1}^{\infty} \FS{\scalb^{n}}\left(\frac{\ell}{2^{j}}\right)\sum_{m=-\ell}^{\ell}\Bigl(\dsh(\PT{x})\otimes\dsh(\PT{y}) + \csh(\PT{x})\otimes\csh(\PT{y})\Bigr), \quad n=1,\dots,2,
\end{align*}
where the vector spherical harmonics $\dsh$, $\csh$ are defined in (\ref{eq:vsh}).

Based on Theorem~\ref{thm:equiv.tight.cfrsys} and the construction of $\Psi$ and $\filtbk$ in \eqref{eqs:scal.numer.s3} and \eqref{eqs:mask.numer.s3}, one can easily verify that the continuous tensor needlet system $\cfrsys(\Psi)=\{\cfra[J,\PT{y}];\cfrb{1},\cfrb{2}: j\geq J, \PT{y}\in\sph{2}\}$ on $\sph{2}$ is a tight tensor needlet system for $\Lpm[\sph{2}]{2}$ for any $J\in\Z$.

By using a polynomial-exact quadrature rule $\QN$ on $\sph{2}$, we can obtain the discrete framelets $\fra(\PT{x}), \frb{1}(\PT{x})$ and $\frb{2}(\PT{x})$, that is
\begin{align*}
	\fra(\PT{x}) &:= \sqrt{\wN}\sum_{\ell=1}^{\infty} \FS{\scala}\left(\frac{\eigvm}{2^{j}}\right)\sum_{m=-\ell}^{\ell}\left(\dsh(\PT{x})\otimes\dsh(\pN) + \csh(\PT{x})\otimes\csh(\pN)\right),\\
   \frb[j,k']{n}(\PT{x}) &:=\sqrt{\wN[j+1,k']}\sum_{\ell=1}^{\infty} \FS{\scalb^{n}}\left(\frac{\eigvm}{2^{j}}\right)\sum_{m=-\ell}^{\ell}\left(\dsh(\PT{x})\otimes\dsh(\pN[j+1,k']) + \csh(\PT{x})\otimes\csh(\pN[j+1,k'])\right).
\end{align*}

Then, we have $\fra\in\Pi_{2^{j-1}}$ and $\frb{1}, \frb{2}\in\Pi_{2^{j}}$ because that the supports of $\FT\scala$, $\FT{\scalb^1}$ and $\FT{\scalb^2}$ are subsets of $[0,1/2]$, $[0,1]$ and $[0,1]$.
Given $\QN$ a polynomial-exact quadrature rule of degree $2^j$ for all $j\in\Z$, the tensor needlet system $\frsys(\filtbk;\{\QN\}_{j\geq J}):=\{\fra[J,k];\frb[j,k']{1},\frb[j,k']{2}: j\geq J,\: k=1,\dots,N_{J},\: k'=1,\dots,N_{j+1}\}$ is a semi-discrete tight needlet system for $\Lpm[\sph{2}]{2}$ for all $J\in\Z$.

\subsection{Data points}\label{sec:numer}
In our simulations, we consider two types of polynomial-exact quadrature rule point sets, described as follows
\begin{enumerate}
  \item \emph{Gauss-Legendre tensor product rule} (GL) \cite{HeWo2012}. The Gauss-Legendre tensor product rule is a (polynomial-exact but not equal area) quadrature rule $\QN:=\{(w_i,\bx_i)\}_{i=1}^{N}, i=0,\ldots,N\}$ on the sphere generated by the tensor product of the Gauss-Legendre nodes on the interval $[-1,1]$ and equi-spaced nodes on the longitude with non-equal weights. Figure~\ref{fig:QN}(a) shows $N=512$ GL points.
  \item \emph{Symmetric spherical designs} (SD) \cite{Womersley_ssd_URL}. The symmetric spherical design is a (polynomial-exact) quadrature rule $\QN:=\{(w_i,\bx_i)\}_{i=1}^{N}, i=0,\ldots,N\}$ on the sphere $\sph{2}$ with equal weights $w_i=1/N$. The points are ``equally'' distributed on the sphere.  Figure~\ref{fig:QN}(b) shows $N=498$ SD points.
\end{enumerate}

\begin{figure}[th]
\begin{minipage}{\textwidth}
  \centering
  \begin{minipage}{0.35\textwidth}
  \centering
  \includegraphics[trim = 0mm 0mm 0mm 0mm, width=0.81\textwidth]{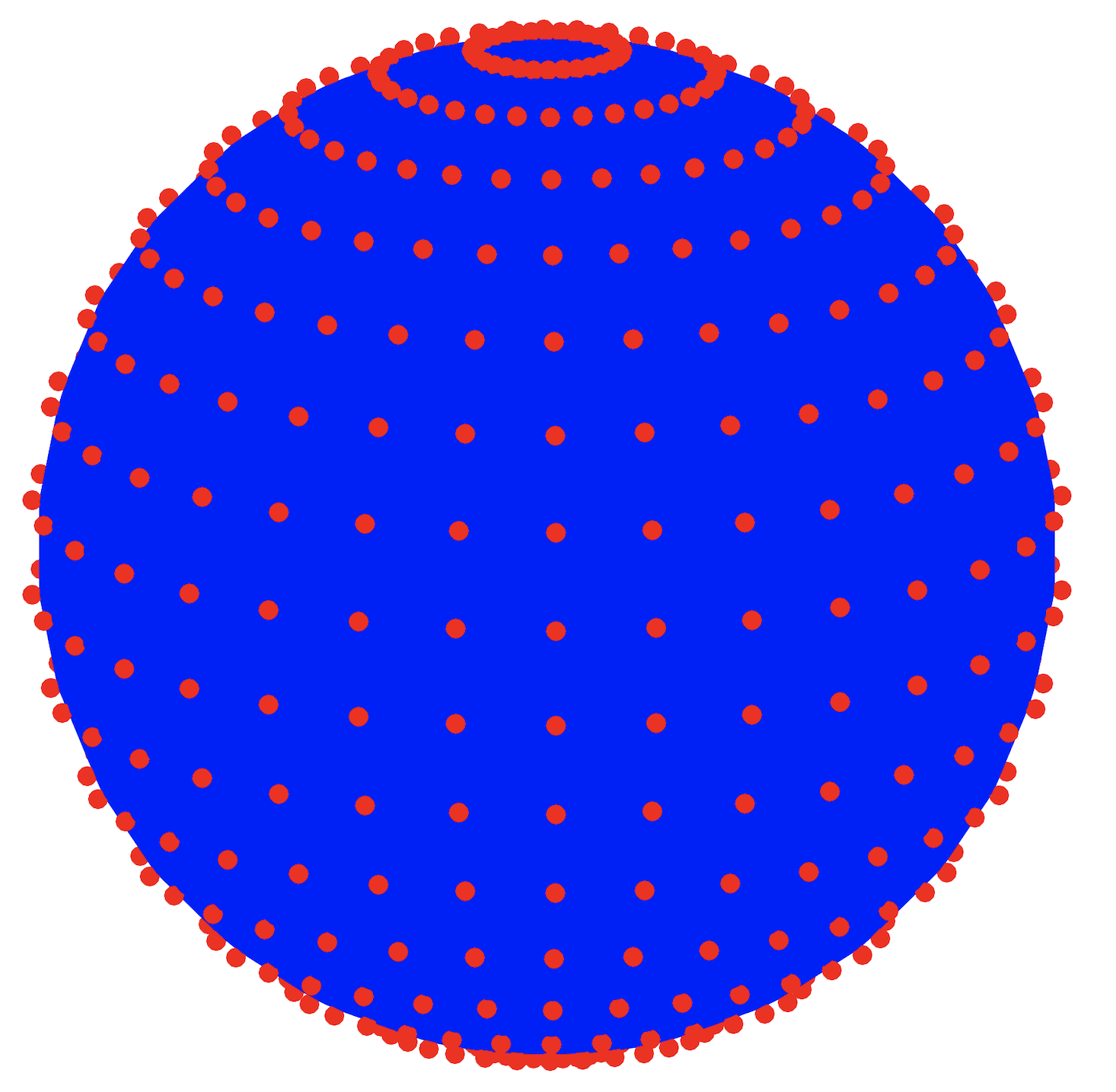}\\[1mm]
  \subcaption{GL, $N=512$}\label{fig:GL}
  \end{minipage}\hspace{1mm}
  \begin{minipage}{0.35\textwidth}
  \centering
  \includegraphics[trim = 0mm 0mm 0mm 0mm, width=0.81\textwidth]{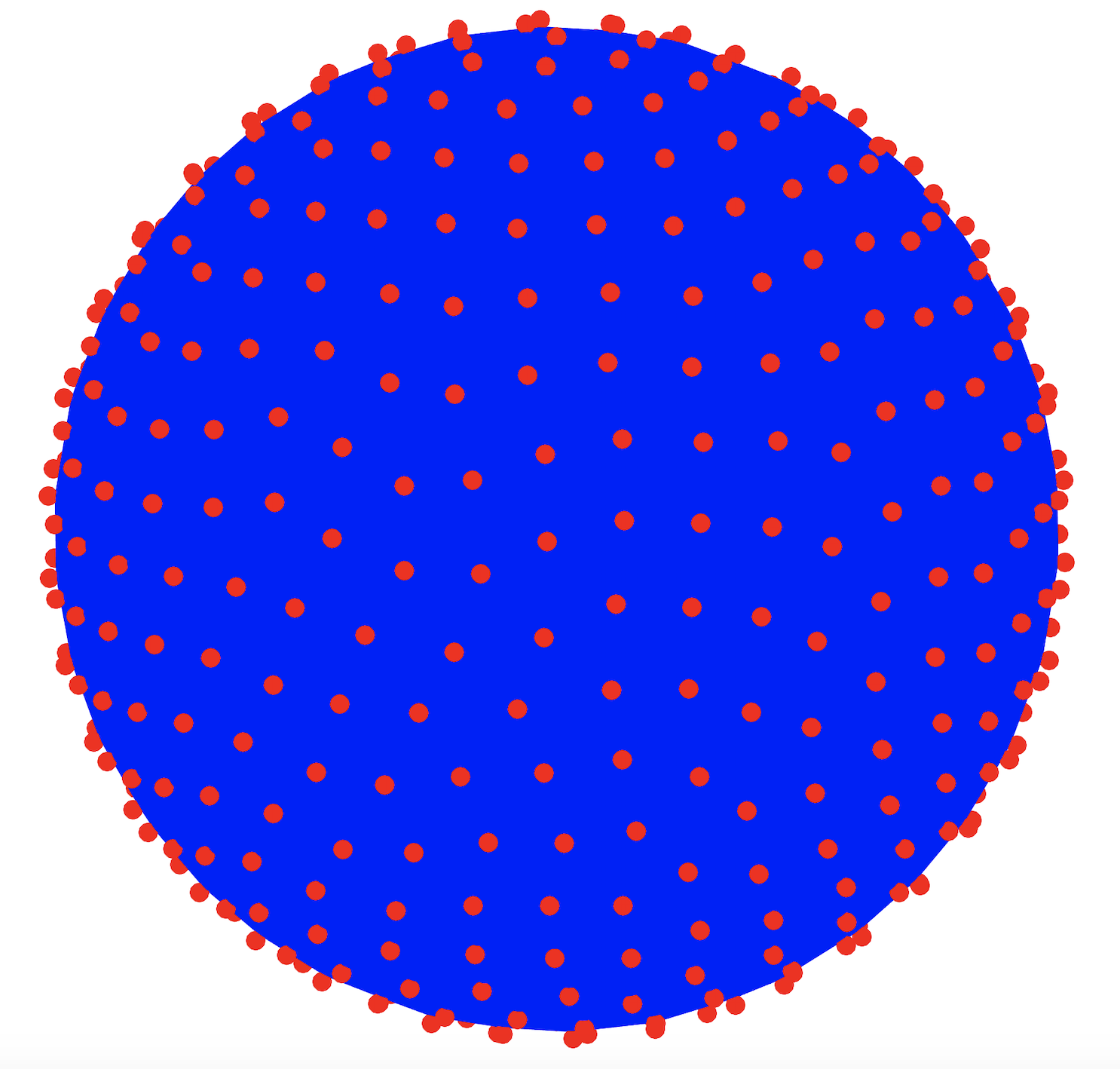}\\[1mm]
  \subcaption{SD, $N=498$}\label{fig:SD}
  \end{minipage}
\end{minipage}
\caption{Point sets on the sphere for: (a) Nodes of Gauss-Legendre tensor rule (GL), (b) Nodes of symmetric spherical designs (SD)}
\label{fig:QN}
\end{figure}

\subsection{Synthetic tangent fields on $\sph{2}$}
To verify our theoretical results and the proposed algorithms, we use two types of simulated tangent fields provided in \cite{FrSc2009}. All these fields are generated using ``stream functions'' and ``velocity potentials'' so that we can easily separate the divergence-free and curl-free parts of the field. Denoted by $s$ and $v$ the stream function and velocity potential, respectively, then each of the tangent fields can be represented by
\begin{equation*}
  T=\underbrace{\mathbf{L}s}_{f^{\rm div}}+\underbrace{\nabla_{*}v}_{f^{\rm curl}}.
\end{equation*}
Here the $\mathbf{L}$ and $\nabla_{*}$ are the surface curl and surface gradient, see Section~\ref{}, and the two terms $\mathbf{L}s$ and $\nabla_{*}v$ are divergent-free and curl-free respectively.
We show the detailed formulism of these two tangent fields, as follows.
\paragraph{Tangent Field A} The stream function and velocity potential for this field are linear combinations of spherical harmonics and are meant to generate realistic synoptic scale meteorological wind fields \cite{FrSc2009}.
The stream function is defined by
\begin{equation}\label{RosbyHaurwitz}
s_1(\PT{x})=-\frac{1}{\sqrt{3}}Y_{1,0}(\PT{x})+\frac{8\sqrt{2}}{3\sqrt{385}}Y_{5,4}(\PT{x}),
\end{equation}
which is known as a Rosby--Haurwitz wave and is an analytic solution to the nonlinear barotropic vorticity equation on the sphere \cite[pp.~453--454]{Holton1973}. In \cite{Williamson1992}, $s_1$ was used as the initial condition for one of the de facto test cases for the shallow water wave equations on the sphere.
The velocity potential is given by
\begin{equation*}
  v_1(\PT{x})=\frac{1}{25}(Y_{4,0}(\PT{x})+Y_{6,-3}(\PT{x})).
\end{equation*}
Note that we can choose different orders of the the spherical harmonics ($l, m$) and coefficients in the above formula of scalar potentials. Here, we inherit the setting of \cite{FrSc2009}.
\paragraph{Tangent Field B} This field still uses the Rosby--Haurwitz wave (\ref{RosbyHaurwitz}) as the stream function, while a linear combination of compactly supported functions for the velocity potential, i.e.
\begin{equation*}
  v_2(\PT{x})=\frac{1}{8}f(\PT{x};5,\pi/6,0)-\frac{1}{7}f(\PT{x};3,\pi/5,\pi/7)+\frac{1}{9}f(\PT{x};5,-\pi/6,\pi/2)-\frac{1}{8}f(\PT{x};3,-\pi/5,\pi/3),
\end{equation*}
with
\begin{equation*}
  f(\PT{x};\sigma,\theta_c,\lambda_c)=\frac{\sigma^3}{12}\sum_{j=0}^{4}(-1)^j\left(
\begin{array}{c}
4\\
j
\end{array} \right)\left|r-\frac{(j-2)}{\sigma}\right|^3.
\end{equation*}
\paragraph{Tangent Field C} Let $\PT{x}_c\in S^2$ have spherical coordinates $(\theta_c,\lambda_c)$, and let $t=\PT{x}\cdot \PT{x}_c$ and $a=1-t$. Define
\begin{equation*}
  g(\PT{x};\theta_c,\lambda_c)=-\frac{1}{2}((3t+3\sqrt{2}a^{3/2}-4)+(3t^2-4t+1)\log(a)+(3t-1)a\log(\sqrt{2a}+a)).
\end{equation*}
The stream function for this tangent field is given by
\begin{equation*}
  s_3(x)=\int_{-\pi/2}^{\theta}\sin^{14}(2\xi)d\xi-3g(x;\pi/4,-\pi/12),
\end{equation*}
where $\theta$ denotes the latitudinal coordinate for $\PT{x}$.
Using $g$, the velocity potential is defined as
\begin{equation*}
  v_3(\PT{x})=\frac{5}{2}g(\PT{x};\pi/4,0)-\frac{7}{4}g(\PT{x};\pi/6,\pi,9)-\frac{3}{2}g(\PT{x};5\pi/16,\pi/10).
\end{equation*}
The left columns of Figure \ref{fig:reconstruction_gl} and \ref{fig:reconstruction_sd} present the tangent fields evaluated at $N=1922$ GL points and $N=1849$ SD points respectively, where the blue arrows and the length of the arrows indicate the direction and scalar value of the tangent field.
\subsection{Experimental setup}
Based on the developed theoretical results and methodology, we detail the main steps performed in the experimental study. First, to evaluate the simulated tangent fields by means of Algorithm 1 and 2, we construct a semi-discrete tensor needlet system given by \eqref{eq:frsys}, where $\Psi=\{\scala; \scalb^1,\scalb^2\}$ are the generators associated with the filter bank $\filtbk=\{\maska;\maskb[1],\maskb[2]\}$ introduced in Subsection~\ref{sec:fmtS2} with $\QN$ a sequence of quadrature rule point sets (either GL or SD). To meet the condition of the decomposition and reconstruction algorithms, the data sequence $v$ sampled from the tangent fields at $\QN[N_J]$ (at the finest scale $J$) must be a $(\Lambda_j,N_j)$-sequence. To achieve this, we project $\fracoev[]$ onto $\Pi_{2^{J}}$ by performing $\fracoev[\ord] = (\fft[\ord]^*\fft[\ord])^{-1}\fft[\ord]^*\fracoev[]$, where $\fft[\ord]$ and $\fft[\ord]^*$ can be implemented fast by forward and adjoint FaVeST algorithms \cite{FaVest2019}. In this manner, we obtain an approximation coefficient sequence $\fracoev[\ord]$ (which is a $(\Lambda_j,N_j)$-sequence) at the finest scale  and the projection error sequence $\projerr=\fracoev[]-\fracoev[J]$.

Recall that GL and SD are a polynomial-exact quadrature rule of order $2^j$ for all $j=J_0,\ldots, J$. Then, by Theorem~\ref{thm:dec:rec}, we can implement decomposition of FaTeNT (Algorithm \ref{algo:decomp.multi.level}) on the $(\Lambda_J,N_J)$-sequence $\fracoev[J]$ and obtain the coefficient sequences $\frbcoev[\ord-1]{1}$, $\frbcoev[\ord-1]{2}$, $\ldots$, $\frbcoev[{\ord[0]}]{1}$, $\frbcoev[{\ord[0]}]{2}$, $\fracoev[{\ord[0]}]$. From the decomposed coefficient sequences $(\frbcoev[\ord-1]{1},\dots,$ $\frbcoev[\ord-1]{r}, \ldots, \frbcoev[{\ord[0]}]{1},\dots,\frbcoev[{\ord[0]}]{r}, \fracoev[{\ord[0]}])$, we are able to exactly reconstruct $\fracoev[\ord]$ by using the reconstruction of FaTeNT Algorithm \eqref{algo:reconstr.multi.level}. Once $\fracoev[\ord]$ is obtained, the sequence $\fracoev[]=\fracoev[\ord]+\projerr$ can be computed with the pre-computed projection error $\projerr$.

\subsection{Results and discussion}
The middle columns of Figure \ref{fig:reconstruction_gl} and \ref{fig:reconstruction_sd} show the reconstructed fields $T^{\rm rec}$ for the original tangent fields $T$ by one level FaTeNT with $J_0=4,J=5$.
We have used $N=2178$ GL points and $N=2148$ SD points, respectively. The corresponding point-wise errors $T-T^{\rm rec}$ on the evaluation points are visualized in the right columns. Also, the maximum norm of the target fields, the reconstructed tangent fields, and the error fields are displayed for each case. It is clear that one level FaTeNT works effectively on these tangent fields. Approximation errors for Tangent Field A (on either GL and SD points) are smaller than that of Tangent Field B due to its better smoothness. As expected, the reconstruction error becomes smaller if more quadrature rule points (or equivalently, higher level $J$) are used. For a better resolution, we only demonstrate the visualization of the case of $J=5$.

To verify the computational consumption of Algorithms 1 and 2, we test for the multi-level FaTeNT with GL points for $J=5,6,\ldots,11$ and $J_0=1$ on Tangent Field A. In the simulations $N:=N_J\approx2^{2J+1}$ GL nodes are used in the implementation of the multi-level FaTeNT decomposition, and $M:=M_J\approx2^{2J}$ coefficients are used in the multi-level FaTeNT reconstruction. For $J=5,6,\ldots,11$, the CPU time for the FaTeNT decomposition and reconstruction are reported in Table~\ref{tab:timecost}. It also displays the ratio of the times for consecutive levels $J$ and $J-1$ to reflect the changing rate of the computational cost. Correspondingly, Figure~\ref{fig:timecost}(a) and (b) show the near linear computational complexity of the FaTeNT decomposition and reconstruction. The cost of FaTeNT decomposition and reconstruction are approximately proportional to $\mathcal{N}^{1.1}$ and $\mathcal{M}^{1.1}$ respectively. These numerical results illustrate that the FaTeNT algorithms incur near linear computational cost.
\begin{figure}[htbp!]
\centering
\begin{minipage}{\textwidth}
  \subcaptionbox{Reconstruction of Tangent Field A with GL}
  {\includegraphics[width=0.31\linewidth]{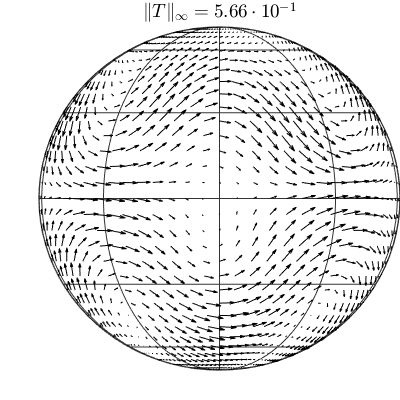}\quad
    \includegraphics[width=0.31\linewidth]{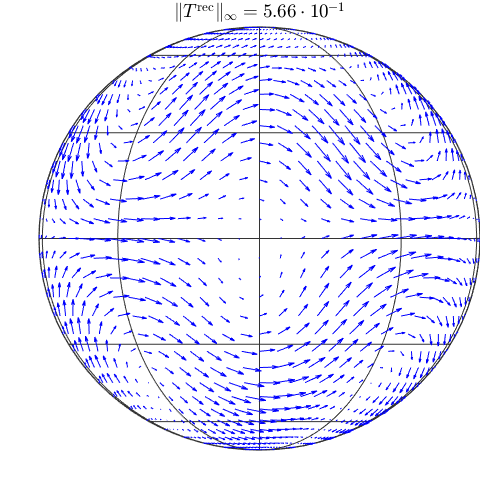}\quad
    \includegraphics[width=0.31\linewidth]{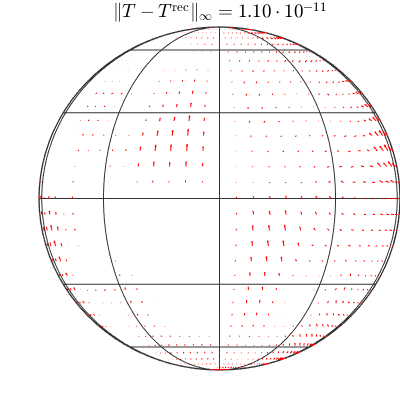}}\\[4mm]
    \subcaptionbox{Reconstruction of Tangent Field B with GL}
  {\includegraphics[width=0.31\linewidth]{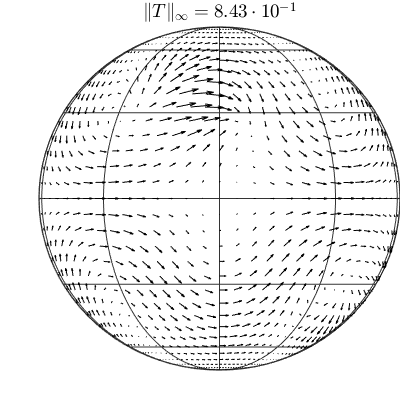}\quad
    \includegraphics[width=0.31\linewidth]{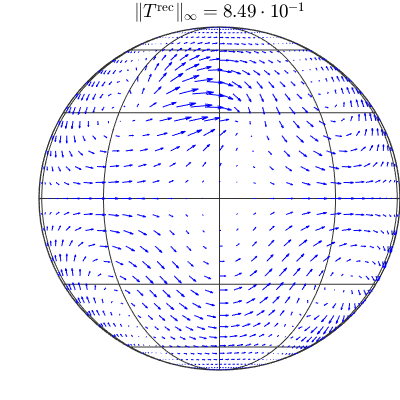}\quad
    \includegraphics[width=0.31\linewidth]{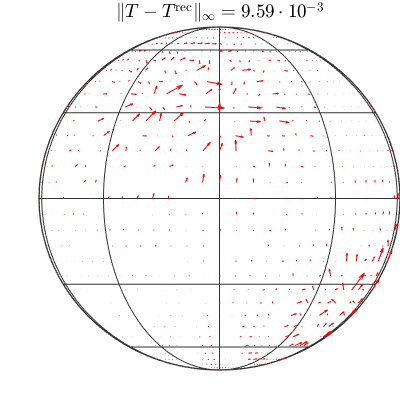}}\\[4mm]
    \subcaptionbox{Reconstruction of Tangent Field C with GL}
  {\includegraphics[width=0.31\linewidth]{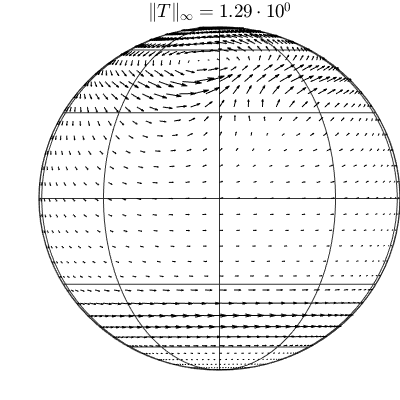}\quad
    \includegraphics[width=0.31\linewidth]{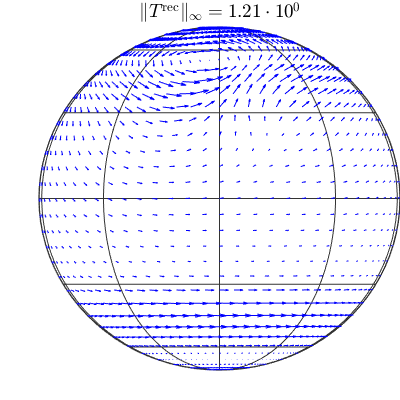}\quad
    \includegraphics[width=0.31\linewidth]{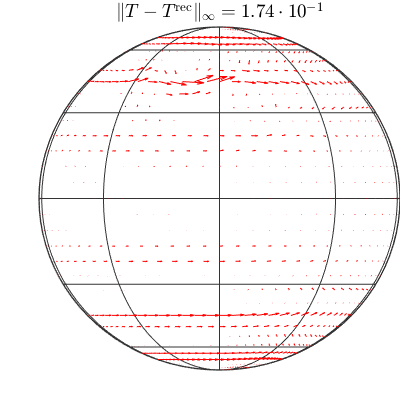}}
\end{minipage}
    \caption{Visualization of the vector field reconstruction by one level FaTeNT with GL. The first column shows the target field $T$. The second and third columns are the reconstructed field $T^{\rm rec}$ and point-wise error $T-T^{\rm rec}$. All plots are orthographic projections of the fields evaluated at level $J=5$ with $N = 2178$ GL nodes. The normalized max norms for $T$,  $T^{\rm rec}$ and $T-T^{\rm rec}$ are displayed for each case.}\label{fig:reconstruction_gl}
\end{figure}

\begin{figure}[htbp!]
\centering
\begin{minipage}{\textwidth}
 \subcaptionbox{Reconstruction of Vector Field 1 with SD points}
{\includegraphics[width=0.31\linewidth]{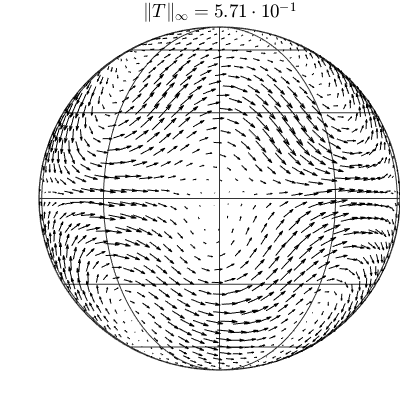}\quad
    \includegraphics[width=0.31\linewidth]{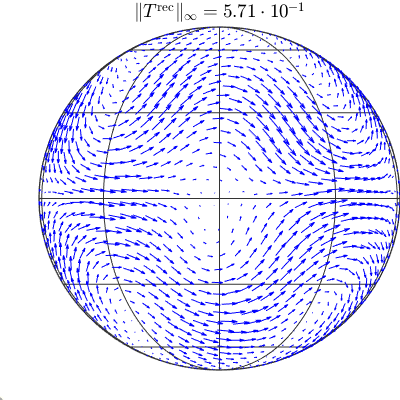}\quad
    \includegraphics[width=0.31\linewidth]{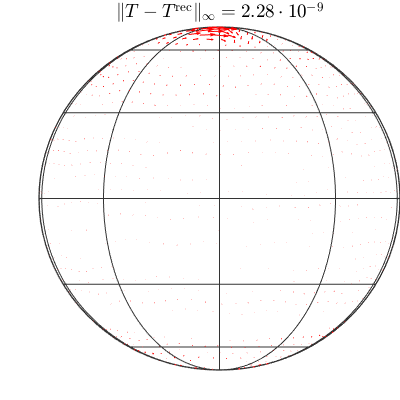}}\\[4mm]
    \subcaptionbox{Reconstruction of Vector Field 2 with SD points}
  {\includegraphics[width=0.31\linewidth]{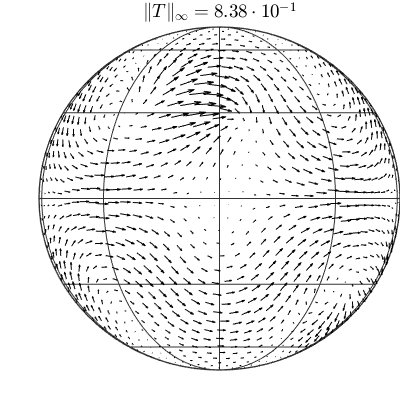}\quad
    \includegraphics[width=0.31\linewidth]{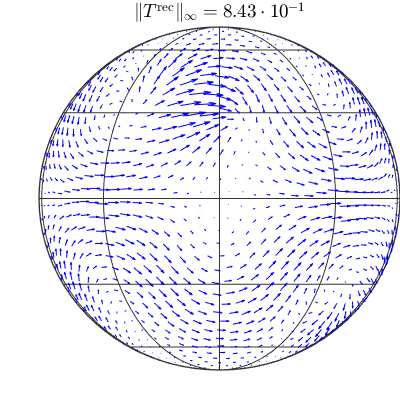}\quad
    \includegraphics[width=0.31\linewidth]{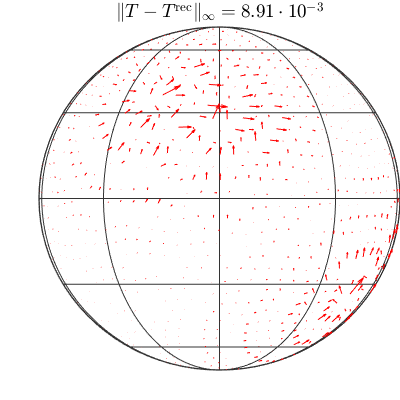}}\\[4mm]
    \subcaptionbox{Reconstruction of Vector Field 3 with SD points}
  {\includegraphics[width=0.31\linewidth]{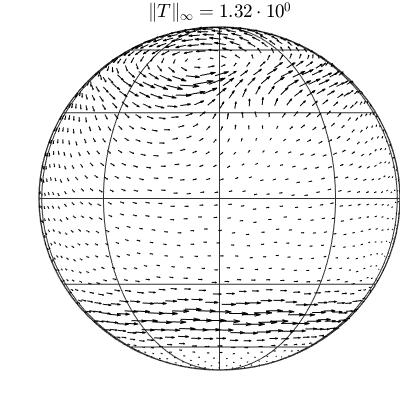}\quad
    \includegraphics[width=0.31\linewidth]{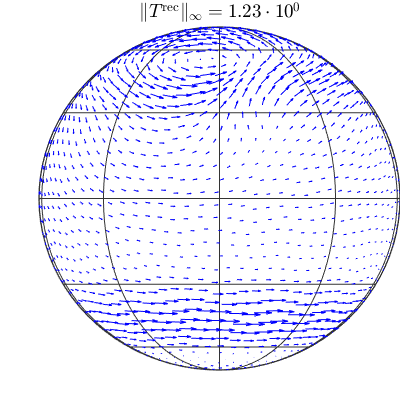}\quad
    \includegraphics[width=0.31\linewidth]{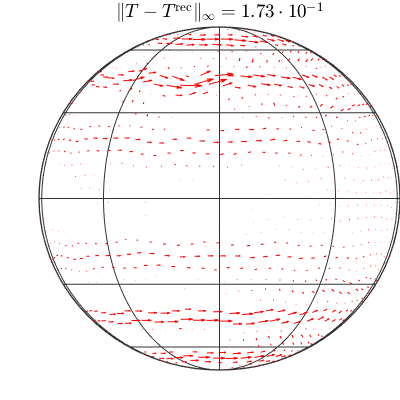}}
\end{minipage}
    \caption{Visualization of the vector field reconstruction by one level FaTeNT with SD points. The first column shows the target tangent field $T$. The second and third columns are the reconstructed field $T^{\rm rec}$ and point-wise error $T-T^{\rm rec}$. All plots are orthographic projections of the fields evaluated at $N = 2148$ SD nodes in level $J=5$. The normalized max norms for $T$, $T^{\rm rec}$ and $T-T^{\rm rec}$ are displayed for each case.}\label{fig:reconstruction_sd}
\end{figure}

\begin{figure}[htbp!]
\begin{minipage}{\textwidth}
	\centering
\begin{minipage}{\textwidth}
  \centering
  \begin{minipage}{0.46\textwidth}
  \centering
  \includegraphics[trim = 0mm 0mm 0mm 0mm, width=\textwidth]{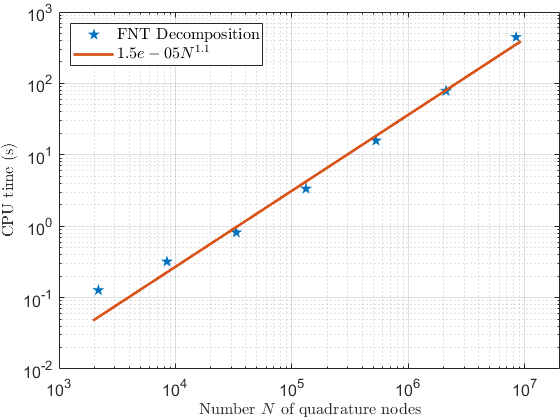}\\
  \subcaption{Decomposition of Multi-Level FaTeNT}\label{fig:time_favest_fwd}
  \end{minipage}\hspace{8mm}
  \begin{minipage}{0.46\textwidth}
  \centering
  \includegraphics[trim = 0mm 0mm 0mm 0mm, width=\textwidth]{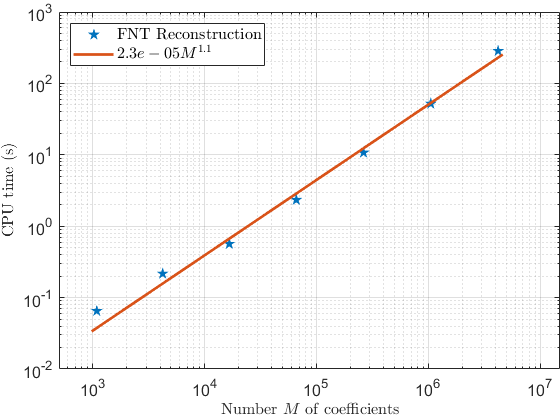}\\
  \subcaption{Reconstruction of Multi-Level FaTeNT}\label{fig:time_faVest_adj}
  \end{minipage}
\end{minipage}
\begin{minipage}{0.9\textwidth}
\vspace{1mm}
\caption{CPU time of decomposition and reconstruction of FaTeNT.}
\label{fig:timecost}
\end{minipage}
\end{minipage}
\end{figure}
%
%
%
\begin{table}[htb]
\centering
\begin{minipage}{0.97\textwidth}
\centering
\scriptsize
\begin{tabular}{l*{8}{c}c}
\toprule
$J$      &  5       &  6  &  7    &   8     &   9    & 10    & 11  \\
\midrule
 $N$           &  2,178    &  8,450 &   33,282  &   132,098   &  526,338 &  2,101,250 & 8,396,802\\

 $t^{\mathrm{dec}}$   & 0.1268 & 0.3196 (2.5) & 0.8175 (2.6)  & 3.3465 (4.1)& 15.7947 (4.7)  & 78.5736 (5.0) & 445.8219 (5.7)\\\midrule

 $M$   & 1,088 &  4,224  & 16,640 &66,048 &  263,168 & 1,050,624  & 4,198,400\\

 $t^{\mathrm{rec}}$  & 0.0650 & 0.2165 (3.3)& 0.5647 (2.6)& 2.3386 (4.1)& 10.7283 (4.6)& 51.9840 (4.8)& 284.9573 (5.5)\\
\bottomrule
\end{tabular}
\end{minipage}
\begin{minipage}{0.9\textwidth}
\vspace{3mm}
\caption{CPU time in seconds in Multi-level FaTeNT decomposition and reconstruction. The $t^{\rm dec}$ is compared with the number of points in decomposion, and $t^{\rm rec}$ is with the number of coefficients in reconstruction. Here $J_0=2$ and $5\leq J\leq11$. FaTeNT decomposition uses Gauss-Legendre tensor rule which has $N=N_J\approx 2^{2J+1}$ nodes and FaTeNT reconstruction uses $M=M_J\approx 2^{2J}$ coefficients. The numbers inside brackets are the ratios $\frac{t^{\mathrm{dec}}(N_{J})}{t^{\mathrm{dec}}(N_{J-1})}$ and $\frac{t^{\mathrm{rec}}(M_{J})}{t^{\mathrm{rec}}(M_{J-1})}$. The numerical test was run under Intel Core i7-4770 CPU @ 3.40GHz with 16GB RAM in Windows 10.}
\label{tab:timecost}
\end{minipage}
\end{table}
Table~\ref{tab:relative-err} reports the relative $l_2$ reconstruction errors $\|T-T^{\rm rec}\|_2/\|T\|_2$ for GL and SD at levels $J=3,4,\ldots,7$ and $J_0=J-1$. We observe that the error depends upon the smoothness of the field, but the impact of choice of quadrature rules (which are polynomial-exact) is negligible.
 We note that the relative $l_2$ errors match those made by SBFs interpolation as reported in \cite{FuWr2009}. For example, when $J=6$, $N=8450$ GL points ($N=8388$ for SD) are used, the reconstruction of FaTeNT have approximation errors of magnitudes $10^{-12}\sim 10^{-10}$, $10^{-4}$, $10^{-3}$ for Tangent Field A, B and C, respectively. That coincides with the results presented in \cite{FuWr2009}, e.g. the mesh-norm $h_X$ is near $0.03$ (when roughly $10,000$ minimum energy (ME) nodes are used) and the corresponding relative errors reach the similar orders, as visualized in Figure~5 in \cite{FuWr2009}. We leave it a future work to probe the dependence of approximation error on the level or number of points used in FaTeNT.
\begin{table}[htb]
\centering
\begin{minipage}{0.97\textwidth}
\centering
\scriptsize
\begin{tabular}{l*{7}{c}c}
\toprule
&$\mbox{Points}$  &   $J=3$    &   $J=4$   &   $J=5$    &  $J=6$     &   $J=7$     \\
\midrule
 \multirow{2}{*}{Vector Field 1}& GL &1.5259e-11 &1.5136e-11   & 1.3203e-11 & 9.4662e-12& 7.7702e-12 \\
                          &SD &  3.3550e-09 & 2.7772e-09 &9.7854e-10 &4.7835e-10  &2.9931e-10  \\

 \midrule
 \multirow{2}{*}{Vector Field 2}& GL  & 1.4251e-01  & 4.2991e-03 &2.5075e-03 & 3.1045e-04 & 5.2139e-05  \\
                          &SD   & 1.3830e-01  &  4.8957e-03 &2.3787e-03 & 3.5217e-04 & 5.8053e-05  \\
 \midrule
 \multirow{2}{*}{Vector Field 3}& GL   &  2.3607e-01 & 4.1204e-02 & 4.5036e-03& 1.1086e-03 & 2.9840e-04 \\
                          &SD   & 2.3971e-01  & 3.1106e-02 &4.6418e-03 & 1.5850e-03 &   3.5757e-04 \\
 \bottomrule
\end{tabular}
\end{minipage}
\begin{minipage}{0.8\textwidth}
\vspace{3mm}
\caption{Relative $l_{2}$ reconstruction errors $\|T-T^{\rm rec}\|_2/\|T\|_2$ with GL and SD quadrature nodes}
\label{tab:relative-err}
\end{minipage}
\end{table}

\subsection{Case study: Earth wind data analysis}
\begin{figure}[h]
\centering
\begin{minipage}{\textwidth}
{\includegraphics[width=0.32\linewidth]{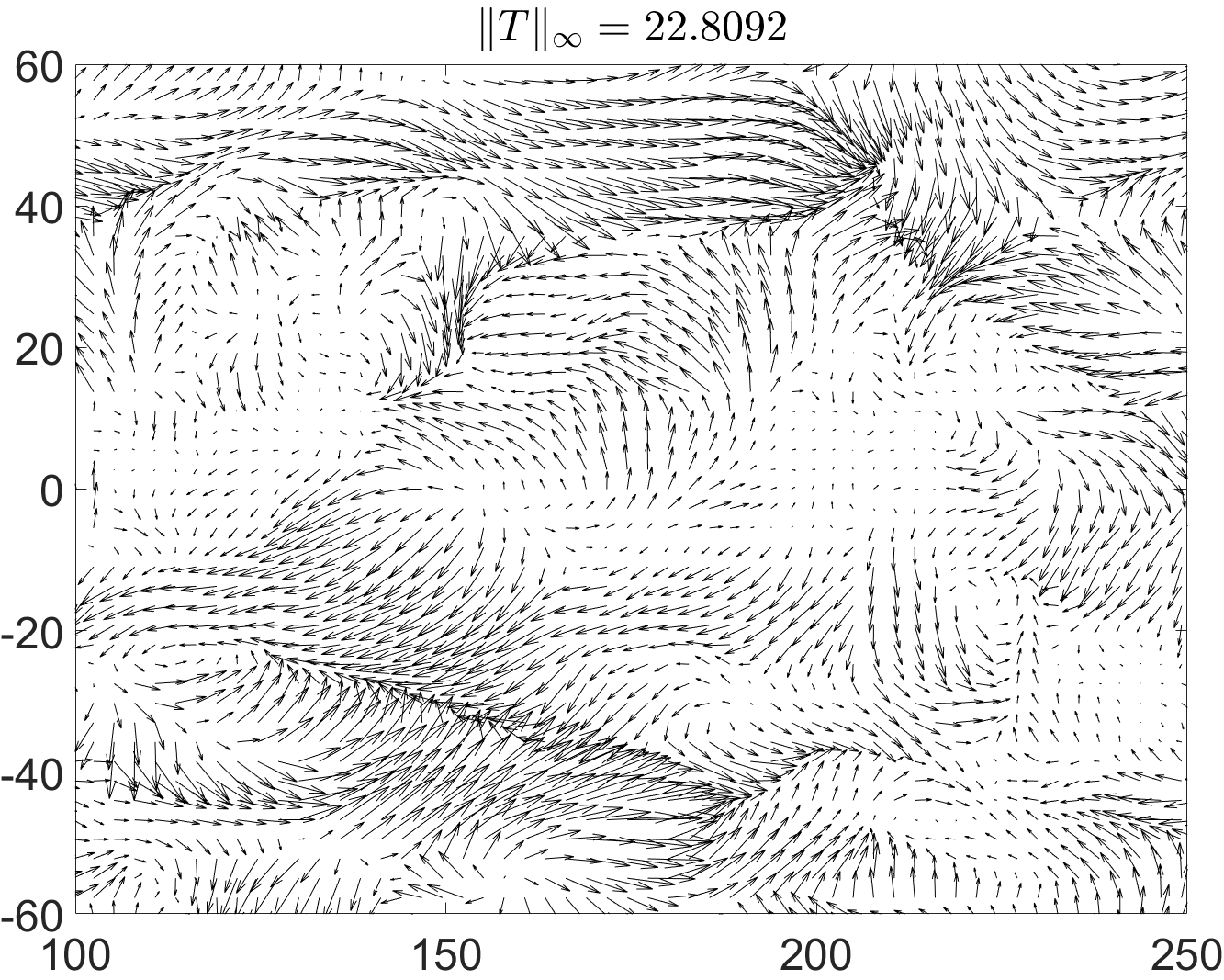}\quad
    \includegraphics[width=0.32\linewidth]{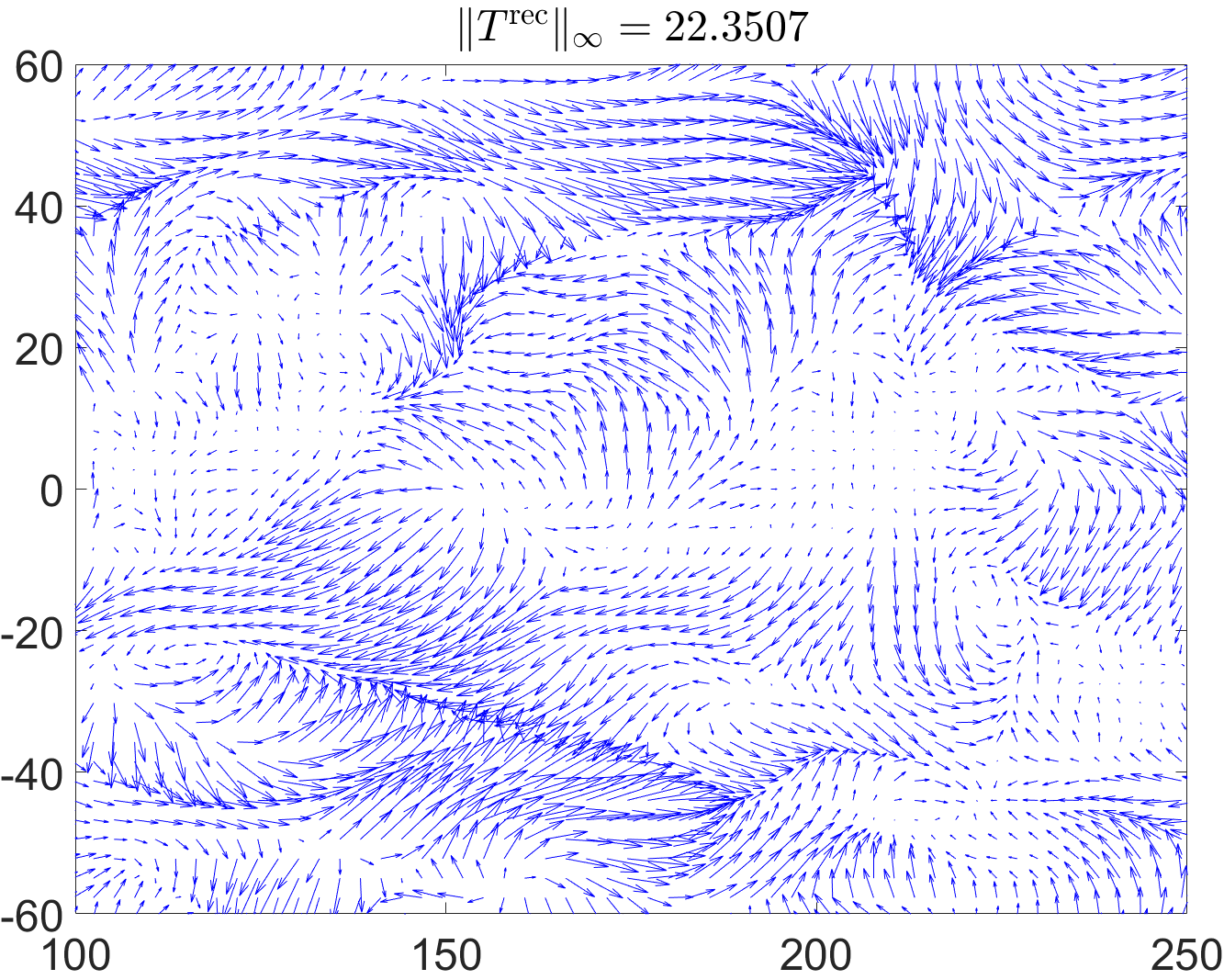}\quad
    \includegraphics[width=0.32\linewidth]{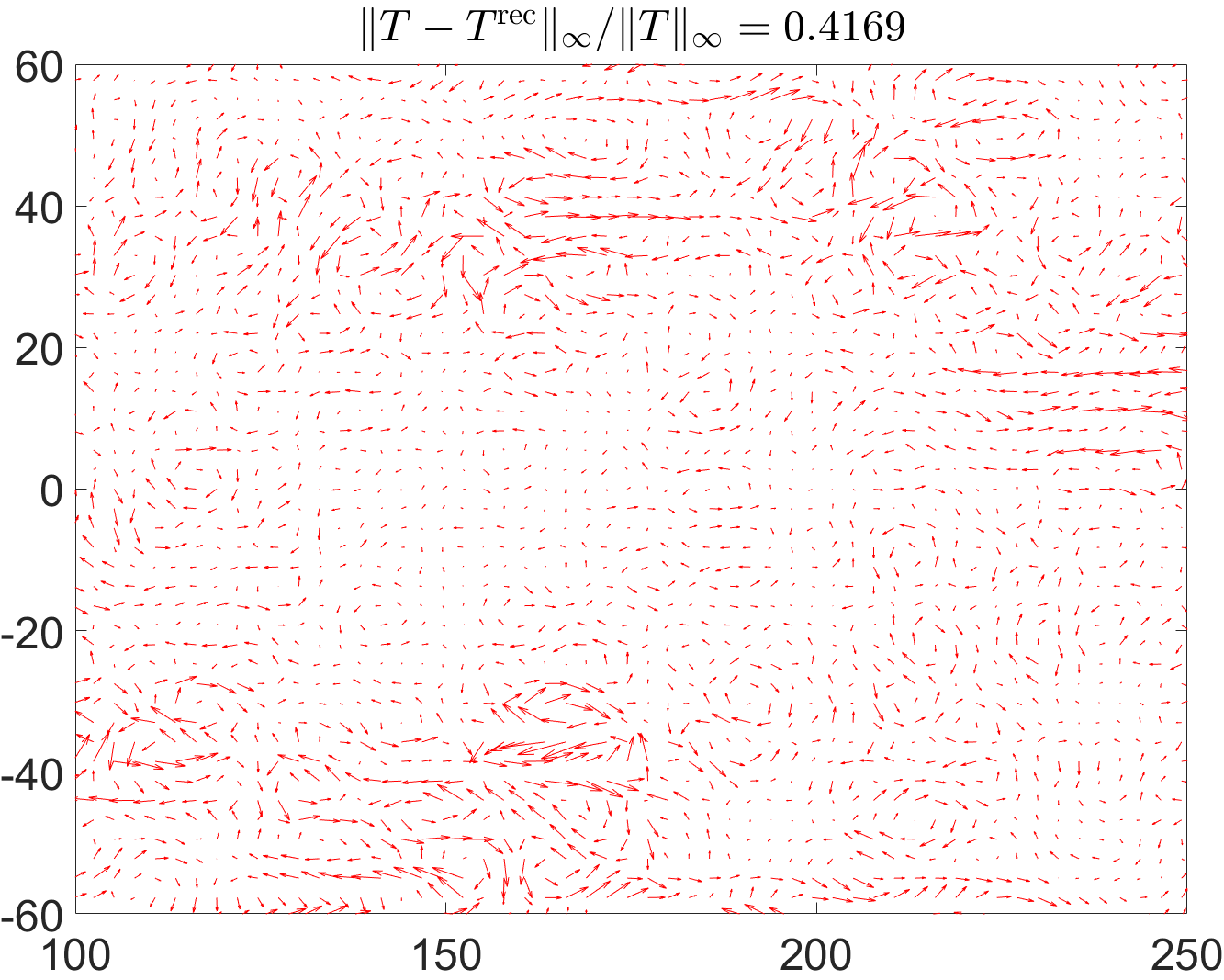}}
\end{minipage}
    \caption{Visualization of earth wind data reconstruction by one level FaTeNT with GL. The first plot shows the real wind field $T$. The second and third plots are the reconstructed field $T^{\rm rec}$ and the point-wise error $T-T^{\rm rec}$. The normalized max norms for $T$, $T^{\rm rec}$ and relative error $(T-T^{\rm rec})/T$ are shown for each case.}\label{casestudy}
\end{figure}
In this subsection, we use a climatological wind field described in NCEP/NCAR Reanalysis \cite{kalnay1996ncep} to test the effectiveness of the proposed FaTeNT algorithms for real world earth wind data. It is common in meteorology to express the wind vector in terms of the following two orthogonal velocity components \cite{kistler2001ncep,kanamitsu2002ncep}.
\begin{itemize}
  \item The $u$ the zonal velocity, which is the component of the horizontal wind towards east.
\item The meridional velocity $v$, which is the component of the horizontal wind towards north.
\end{itemize}
In this case study, we use the NCEP/NCAR Reanalysis daily average data available from the cite of Physical Sciences Division (PSD)\footnote{https://www.esrl.noaa.gov/psd/data/gridded/data.ncep.reanalysis.pressure.html}. In the simulations, we only consider the samples at year 2018 with the first pressure level ($17$ in total) and first time instant ($205$ in total). Its spatial coverage performs in $2.5$ degree $\times 2.5$ degree global grids ($144\times73$).

Based on Equation (16) in \cite{Narcowich2007}, we convert the wind field expressed in $u$ and $v$ to the 3D representation $T=[T_1,T_2,T_3]$. Then we sample the vector fields at $8450$ GL points with $J=6$. It provides an approximation of the real wind field. In Figure~\ref{casestudy}, we plot the real wind field with $8450$ GL samples, the reconstructed vector field by using one level FaTeNT and the residual vector field, from left to right. For a better resolution and visualization, we restrict the range of longitude in $[100,250]$ (the actual range of the original earth wind is $[0,357.5]$) and latitude in $[-60,60]$ (the actual range is $[-90,90]$). The normalized maximum norms, i.e. $\|T\|_{\infty}$,  $\|T^{\rm rec}\|_{\infty}$ and $\|T-T^{\rm rec}\|_{\infty}/\|T\|_{\infty}$, in terms of $8450$ samples, are displayed for each plot. It can be observed that FaTeNT works effectively on reconstruction although the wind field changes rapidly in some region. On the other hand, the relative error $l_2$-error $\|T-T^{\rm rec}\|_{2}/\|T\|_{2}$ is $0.4169$. It should be noted that the error magnitude is mainly caused by the approximation of original scattered instances with GL points, which can be viewed as ``data-preprocessing'' error, and the approximation error of FaTeNT algorithms, which is determined by the smoothness of the real-world vector field.

\section*{Acknowledgements}
This research was partially supported under the Australian Research Council's Discovery Project DP160101366.
We would thank E. J. Fuselier and G. B. Wright for providing their MATLAB codes implementing vector field examples.
The case study for earth wind data analysis is based on NCEP Reanalysis data provided by the NOAA/OAR/ESRL PSD, Boulder, Colorado, USA, from their Web site at \url{https://www.esrl.noaa.gov/psd/}.

\bibliographystyle{apa} 
\bibliography{TensorNeed1}

\begin{thebibliography}{}

\bibitem[\protect\astroncite{Anh et~al.}{2018}]{Anh2018}
Anh, V.~V., Broadbridge, P., Olenko, A., and Wang, Y.~G. (2018).
\newblock On approximation for fractional stochastic partial differential
  equations on the sphere.
\newblock {\em Stochastic Environmental Research and Risk Assessment},
  32(9):2585--2603.

\bibitem[\protect\astroncite{Biedenharn and
  Louck}{1984}]{biedenharn1984angular}
Biedenharn, L.~C. and Louck, J.~D. (1984).
\newblock {\em Angular momentum in quantum physics: theory and application}.
\newblock Cambridge University Press.

\bibitem[\protect\astroncite{Bodmann et~al.}{2015}]{BoKuZh2015}
Bodmann, B.~G., Kutyniok, G., and Zhuang, X. (2015).
\newblock Gabor shearlets.
\newblock {\em Applied and Computational Harmonic Analysis}, 38(1):87--114.

\bibitem[\protect\astroncite{Candes et~al.}{2006}]{CaDeDoYi2006}
Candes, E., Demanet, L., Donoho, D., and Ying, L. (2006).
\newblock Fast discrete curvelet transforms.
\newblock {\em Multiscale Modeling and Simulation}, 5(3):861--899.

\bibitem[\protect\astroncite{Chui}{2016}]{Chui1992}
Chui, C.~K. (2016).
\newblock {\em An Introduction to Wavelets}.
\newblock Elsevier.

\bibitem[\protect\astroncite{Cohen et~al.}{1993}]{CoDaVi1993}
Cohen, A., Daubechies, I., and Vial, P. (1993).
\newblock Wavelets on the interval and fast wavelet transforms.
\newblock {\em Applied and Computational Harmonic Analysis}.

\bibitem[\protect\astroncite{Coifman and Maggioni}{2006}]{CoMa2006}
Coifman, R.~R. and Maggioni, M. (2006).
\newblock Diffusion wavelets.
\newblock {\em Applied and Computational Harmonic Analysis}, 21(1):53--94.

\bibitem[\protect\astroncite{Daubechies}{1992}]{Daubechies1992}
Daubechies, I. (1992).
\newblock {\em Ten Lectures on Wavelets}.
\newblock SIAM.

\bibitem[\protect\astroncite{Daubechies et~al.}{2003}]{DaHaRoSh2003}
Daubechies, I., Han, B., Ron, A., and Shen, Z. (2003).
\newblock Framelets: {MRA}-based constructions of wavelet frames.
\newblock {\em Applied and Computational Harmonic Analysis}, 14(1):1--46.

\bibitem[\protect\astroncite{DLMF}{2014}]{NIST:DLMF}
DLMF (2014).
\newblock {NIST Digital Library of Mathematical Functions}.
\newblock http://dlmf.nist.gov/, Release 1.0.9 of 2014-08-29.
\newblock Online companion to \cite{Olver:2010:NHMF}.

\bibitem[\protect\astroncite{Dong}{2017}]{Dong2017}
Dong, B. (2017).
\newblock Sparse representation on graphs by tight wavelet frames and
  applications.
\newblock {\em Applied and Computational Harmonic Analysis}, 42(3):452--479.

\bibitem[\protect\astroncite{Edmonds}{1996}]{Edmonds2016}
Edmonds, A.~R. (1996).
\newblock {\em Angular Momentum in Quantum Mechanics}.
\newblock Princeton University Press.

\bibitem[\protect\astroncite{Fengler and Freeden}{2005}]{Fengler2005}
Fengler, M.~J. and Freeden, W. (2005).
\newblock A nonlinear {G}alerkin scheme involving vector and tensor spherical
  harmonics for solving the incompressible {Navier--Stokes} equation on the
  sphere.
\newblock {\em SIAM Journal on Scientific Computing}, 27(3):967--994.

\bibitem[\protect\astroncite{Fischer and Prestin}{1997}]{FiPr1997}
Fischer, B. and Prestin, J. (1997).
\newblock Wavelets based on orthogonal polynomials.
\newblock {\em Mathematics of Computation of the American Mathematical
  Society}, 66(220):1593--1618.

\bibitem[\protect\astroncite{Freeden and Gervens}{1993}]{Freeden1993}
Freeden, W. and Gervens, T. (1993).
\newblock Vector spherical spline interpolation--basic theory and computational
  aspects.
\newblock {\em Mathematical Methods in the Applied Sciences}, 16(3):151--183.

\bibitem[\protect\astroncite{Freeden and Schreiner}{2008}]{FrSc2009}
Freeden, W. and Schreiner, M. (2008).
\newblock {\em Spherical Functions of Mathematical Geosciences: A Scalar,
  Vectorial, and Tensorial Setup}.
\newblock Springer Science \& Business Media.

\bibitem[\protect\astroncite{Fuselier and Wright}{2009}]{FuWr2009}
Fuselier, E.~J. and Wright, G.~B. (2009).
\newblock Stability and error estimates for vector field interpolation and
  decomposition on the sphere with {RBFs}.
\newblock {\em SIAM Journal on Numerical Analysis}, 47(5):3213--3239.

\bibitem[\protect\astroncite{Ganesh et~al.}{2011}]{GaLeSl2011}
Ganesh, M., Le~Gia, Q.~T., and Sloan, I.~H. (2011).
\newblock A pseudospectral quadrature method for {Navier-Stokes} equations on
  rotating spheres.
\newblock {\em Mathematics of Computation}, 80(275):1397--1430.

\bibitem[\protect\astroncite{Giraldo and Rosmond}{2004}]{Giraldo2004}
Giraldo, F. and Rosmond, T. (2004).
\newblock A scalable spectral element {Eulerian} atmospheric model ({SEE-AM})
  for {NWP}: Dynamical core tests.
\newblock {\em Monthly Weather Review}, 132:133--153.

\bibitem[\protect\astroncite{Gordon}{1885}]{gordon1885vorlesungen}
Gordon, P. (1885).
\newblock {\em Vorlesungen {\"u}ber Invariantentheorie}, volume~1.
\newblock Teubner.

\bibitem[\protect\astroncite{Han et~al.}{2016}]{HaZhZh2016}
Han, B., Zhao, Z., and Zhuang, X. (2016).
\newblock Directional tensor product complex tight framelets with low
  redundancy.
\newblock {\em Applied and Computational Harmonic Analysis}, 41(2):603--637.

\bibitem[\protect\astroncite{Han and Zhuang}{2015}]{HaZh2015}
Han, B. and Zhuang, X. (2015).
\newblock Smooth affine shear tight frames with {MRA} structure.
\newblock {\em Applied and Computational Harmonic Analysis}, 39(2):300--338.

\bibitem[\protect\astroncite{Hesse and Womersley}{2012}]{HeWo2012}
Hesse, K. and Womersley, R.~S. (2012).
\newblock Numerical integration with polynomial exactness over a spherical cap.
\newblock {\em Advances in Computational Mathematics}, 36(3):451--483.

\bibitem[\protect\astroncite{Holton}{1973}]{Holton1973}
Holton, J.~R. (1973).
\newblock An introduction to dynamic meteorology.
\newblock {\em American Journal of Physics}, 41(5):752--754.

\bibitem[\protect\astroncite{Kalnay et~al.}{1996}]{kalnay1996ncep}
Kalnay, E., Kanamitsu, M., Kistler, R., Collins, W., Deaven, D., Gandin, L.,
  Iredell, M., Saha, S., White, G., Woollen, J., et~al. (1996).
\newblock The {NCEP/NCAR} 40-year reanalysis project.
\newblock {\em Bulletin of the American Meteorological Society},
  77(3):437--472.

\bibitem[\protect\astroncite{Kanamitsu et~al.}{2002}]{kanamitsu2002ncep}
Kanamitsu, M., Ebisuzaki, W., Woollen, J., Yang, S.-K., Hnilo, J., Fiorino, M.,
  and Potter, G. (2002).
\newblock {NCEP–DOE AMIP-II Reanalysis (R-2)}.
\newblock {\em Bulletin of the American Meteorological Society},
  83(11):1631--1644.

\bibitem[\protect\astroncite{Keiner et~al.}{2007}]{KeKuPo2007}
Keiner, J., Kunis, S., and Potts, D. (2007).
\newblock Efficient reconstruction of functions on the sphere from scattered
  data.
\newblock {\em Journal of Fourier Analysis and Applications}, 13(4):435--458.

\bibitem[\protect\astroncite{Kistler et~al.}{2001}]{kistler2001ncep}
Kistler, R., Kalnay, E., Collins, W., Saha, S., White, G., Woollen, J.,
  Chelliah, M., Ebisuzaki, W., Kanamitsu, M., Kousky, V., et~al. (2001).
\newblock The {NCEP--NCAR} 50-year reanalysis: monthly means {CD-ROM} and
  documentation.
\newblock {\em Bulletin of the American Meteorological society},
  82(2):247--268.

\bibitem[\protect\astroncite{Krishnamurti and Bounoua}{2018}]{Krishnamurti2018}
Krishnamurti, T.~N. and Bounoua, L. (2018).
\newblock {\em An Introduction to Numerical Weather Prediction Techniques}.
\newblock CRC press.

\bibitem[\protect\astroncite{Le~Gia et~al.}{2019}]{FaVest2019}
Le~Gia, Q.~T., Li, M., and Wang, Y.~G. (2019).
\newblock {FaVeST}: Fast vector spherical harmonic transforms.
\newblock {\em arXiv Preprint arxiv 2787379}.

\bibitem[\protect\astroncite{Le~Gia et~al.}{2017}]{LeSlWaWo2017}
Le~Gia, Q.~T., Sloan, I.~H., Wang, Y.~G., and Womersley, R.~S. (2017).
\newblock Needlet approximation for isotropic random fields on the sphere.
\newblock {\em Journal of Approximation Theory}, 216:86--116.

\bibitem[\protect\astroncite{Lynch}{1988}]{Lynch1988}
Lynch, P. (1988).
\newblock Deducing the wind from vorticity and divergence.
\newblock {\em Monthly Weather Review}, 116(1):86--93.

\bibitem[\protect\astroncite{Maggioni and Mhaskar}{2008}]{MaMh2008}
Maggioni, M. and Mhaskar, H. (2008).
\newblock Diffusion polynomial frames on metric measure spaces.
\newblock {\em Applied and Computational Harmonic Analysis}, 24(3):329--353.

\bibitem[\protect\astroncite{Mallat}{2009}]{Mallat2009}
Mallat, S. (2009).
\newblock {\em A Wavelet Tour of Signal Processing: The Sparce Way}.
\newblock Elsevier.

\bibitem[\protect\astroncite{Meyer}{}]{Meyer1990}
Meyer, Y.
\newblock {\em Ondelettes et Op\'{e}rateurs, Tomes I: Ondelettes}.
\newblock Hermann.

\bibitem[\protect\astroncite{Mhaskar}{2010}]{Mhaskar2010}
Mhaskar, H. (2010).
\newblock Eignets for function approximation on manifolds.
\newblock {\em Applied and Computational Harmonic Analysis}, 29(1):63--87.

\bibitem[\protect\astroncite{Mhaskar and Prestin}{2004}]{MhPr2004}
Mhaskar, H. and Prestin, J. (2004).
\newblock Polynomial frames: a fast tour.
\newblock {\em Approximation Theory XI: Gatlinburg}, pages 101--132.

\bibitem[\protect\astroncite{Narcowich et~al.}{2007}]{Narcowich2007}
Narcowich, F.~J., Ward, J.~D., and Wright, G.~B. (2007).
\newblock Divergence-free {RBFs} on surfaces.
\newblock {\em Journal of Fourier Analysis and Applications}, 13(6):643--663.

\bibitem[\protect\astroncite{Olver et~al.}{2010}]{Olver:2010:NHMF}
Olver, F.~W., Lozier, D.~W., Boisvert, R.~F., and Clark, C.~W. (2010).
\newblock {\em NIST Handbook of Mathematical Functions Hardback and CD-ROM}.
\newblock Cambridge University Press.

\bibitem[\protect\astroncite{Rokhlin and Tygert}{2006}]{RoTy2006}
Rokhlin, V. and Tygert, M. (2006).
\newblock Fast algorithms for spherical harmonic expansions.
\newblock {\em SIAM Journal on Scientific Computing}, 27(6):1903--1928.

\bibitem[\protect\astroncite{Ron and Shen}{1997}]{RoSh1997}
Ron, A. and Shen, Z. (1997).
\newblock Affine systems in {$L_2(\bold R^d)$}: The analysis of the analysis
  operator.
\newblock {\em Journal of Functional Analysis}, 148(2):408--447.

\bibitem[\protect\astroncite{Tygert}{2008}]{Tygert2008}
Tygert, M. (2008).
\newblock Fast algorithms for spherical harmonic expansions, {II}.
\newblock {\em Journal of Computational Physics}, 227(8):4260--4279.

\bibitem[\protect\astroncite{Tygert}{2010}]{Tygert2010}
Tygert, M. (2010).
\newblock Fast algorithms for spherical harmonic expansions, {III}.
\newblock {\em Journal of Computational Physics}, 229(18):6181--6192.

\bibitem[\protect\astroncite{Wahba}{1982}]{Wahba1982}
Wahba, G. (1982).
\newblock Vector splines on the sphere, with application to the estimation of
  vorticity and divergence from discrete, noisy data.
\newblock In {\em Multivariate Approximation Theory II}, pages 407--429.
  Springer.

\bibitem[\protect\astroncite{Wang et~al.}{2017}]{WaLeSlWo2017}
Wang, Y.~G., Le~Gia, Q.~T., Sloan, I.~H., and Womersley, R.~S. (2017).
\newblock Fully discrete needlet approximation on the sphere.
\newblock {\em Applied and Computational Harmonic Analysis}, 43(2):292--316.

\bibitem[\protect\astroncite{Wang and Zhuang}{2018}]{WaZh2018}
Wang, Y.~G. and Zhuang, X. (2018).
\newblock Tight framelets and fast framelet filter bank transforms on
  manifolds.
\newblock {\em Applied and Computational Harmonic Analysis},
  \url{https://doi.org/10.1016/j.acha.2018.02.001}.

\bibitem[\protect\astroncite{Watterson}{2001}]{Watterson2001}
Watterson, I. (2001).
\newblock Decomposition of global ocean currents using a simple iterative
  method.
\newblock {\em Journal of Atmospheric and Oceanic Technology}, 18(4):691--703.

\bibitem[\protect\astroncite{Williamson et~al.}{1992}]{Williamson1992}
Williamson, D.~L., Drake, J.~B., Hack, J.~J., Jakob, R., and Swarztrauber,
  P.~N. (1992).
\newblock A standard test set for numerical approximations to the shallow water
  equations in spherical geometry.
\newblock {\em Journal of Computational Physics}, 102(1):211--224.

\bibitem[\protect\astroncite{Womersley}{2018}]{Womersley_ssd_URL}
Womersley, R.~S. (2018).
\newblock Efficient spherical designs with good geometric properties.
\newblock In {\em Contemporary Computational Mathematics-A Celebration of the
  80th Birthday of Ian Sloan}, pages 1243--1285. Springer. URL:
  \url{https://web.maths.unsw.edu.au/~rsw/Sphere/}.

\end{thebibliography}

\end{document}